\title[ ]{Arithmetic subtrees in large subsets of products of trees}
\author{Kamil Bulinski}
\address{School of Mathematics and Statistics, University of Sydney, Australia}
\email{kamil.bulinski@sydney.edu.au}
\author{Alexander Fish}
\address{School of Mathematics and Statistics, University of Sydney, Australia}
\email{alexander.fish@sydney.edu.au}
\begin{document}
\maketitle
\raggedbottom

\newcommand{\cA}{\mathcal{A}}
\newcommand{\cB}{\mathcal{B}}
\newcommand{\cC}{\mathcal{C}}
\newcommand{\cD}{\mathcal{D}}
\newcommand{\cE}{\mathcal{E}}
\newcommand{\cF}{\mathcal{F}}
\newcommand{\cG}{\mathcal{G}}
\newcommand{\cH}{\mathcal{H}}
\newcommand{\cI}{\mathcal{I}}
\newcommand{\cJ}{\mathcal{J}}
\newcommand{\cK}{\mathcal{K}}
\newcommand{\cL}{\mathcal{L}}
\newcommand{\cM}{\mathcal{M}}
\newcommand{\cN}{\mathcal{N}}
\newcommand{\cO}{\mathcal{O}}
\newcommand{\cP}{\mathcal{P}}
\newcommand{\cQ}{\mathcal{Q}}
\newcommand{\cR}{\mathcal{R}}
\newcommand{\cS}{\mathcal{S}}
\newcommand{\cT}{\mathcal{T}}
\newcommand{\cU}{\mathcal{U}}
\newcommand{\cV}{\mathcal{V}}
\newcommand{\cW}{\mathcal{W}}
\newcommand{\cX}{\mathcal{X}}
\newcommand{\cY}{\mathcal{Y}}
\newcommand{\cZ}{\mathcal{Z}}
\newcommand{\bA}{\mathbb{A}}
\newcommand{\bB}{\mathbb{B}}
\newcommand{\bC}{\mathbb{C}}
\newcommand{\bD}{\mathbb{D}}
\newcommand{\bE}{\mathbb{E}}
\newcommand{\bF}{\mathbb{F}}
\newcommand{\bG}{\mathbb{G}}
\newcommand{\bH}{\mathbb{H}}
\newcommand{\bI}{\mathbb{I}}
\newcommand{\bJ}{\mathbb{J}}
\newcommand{\bK}{\mathbb{K}}
\newcommand{\bL}{\mathbb{L}}
\newcommand{\bM}{\mathbb{M}}
\newcommand{\bN}{\mathbb{N}}
\newcommand{\bO}{\mathbb{O}}
\newcommand{\bP}{\mathbb{P}}
\newcommand{\bQ}{\mathbb{Q}}
\newcommand{\bR}{\mathbb{R}}
\newcommand{\bS}{\mathbb{S}}
\newcommand{\bT}{\mathbb{T}}
\newcommand{\bU}{\mathbb{U}}
\newcommand{\bV}{\mathbb{V}}
\newcommand{\bW}{\mathbb{W}}
\newcommand{\bX}{\mathbb{X}}
\newcommand{\bY}{\mathbb{Y}}
\newcommand{\bZ}{\mathbb{Z}}

\newcounter{dummy} \numberwithin{dummy}{section}

\theoremstyle{definition}
\newtheorem{mydef}[dummy]{Definition}
\newtheorem{prop}[dummy]{Proposition}
\newtheorem{corol}[dummy]{Corollary}
\newtheorem{thm}[dummy]{Theorem}
\newtheorem{lemma}[dummy]{Lemma}
\newtheorem{eg}[dummy]{Example}
\newtheorem{notation}[dummy]{Notation}
\newtheorem{remark}[dummy]{Remark}
\newtheorem{claim}[dummy]{Claim}
\newtheorem{Exercise}[dummy]{Exercise}
\newtheorem{question}[dummy]{Question}
\newtheorem{conjecture}[dummy]{Conjecture}

\begin{abstract}
Furstenberg-Weiss have extended Szemer\'edi's theorem on arithmetic progressions to trees by showing that a large subset of the tree contains arbitrarily long arithmetic subtrees. We study higher dimensional versions that analogously extend the multidimensional Szemer\'edi theorem by demonstrating the existence of certain arithmetic structures in large subsets of a cartesian product of trees.
\end{abstract}

\section{Introduction}

\subsection{Arithmetic subtrees} In \cite{FWTrees}, Furstenberg and Weiss have generalized Szemer\'edi's Theorem on arithmetic progressions to trees. We begin by reformulating this theorem in terms of semigroups as follows. We let $\Lambda$ be a finite set and let $\Lambda^* = \bigcup_{n=0}^{\infty} \Lambda^n$ denote the set of all finite words in the alphabet $\Lambda$. We will write an element of $\Lambda^*$ as a word $\lambda_1 \cdots \lambda_n$ rather than as a tuple. Note that $\Lambda^*$ is the free semigroup generated by $\Lambda$, with the empty word $\emptyset$ being the identity. We let $|w| \in \bZ_{\geq 0}$ denote the length of $w \in \Lambda^*$ and we let $B_r(\Lambda^*) = \{w \in \Lambda^* ~|~ |w| \leq r\}$ denote the ball of radius $r$ and we let $S_r(\Lambda^*) = \{w \in \Lambda^* ~|~ |w| = r\}$ the sphere or radius $r$ in the Cayley graph of this semigroup. For $A \subset \Lambda^*$ we can define the upper density $$\overline{d}(A) =\limsup_{N \to \infty} \frac{1}{N} \sum_{r=0}^{N-1} \frac{|A \cap S_r(\Lambda^*)|}{|S_r(\Lambda^*)|}.$$ We say that $w' \in \Lambda^*$ is a \textit{descendent} of $w$ if there exists an $u \in \Lambda^*$ such that $w' = wu$ and if $|u|=1$ then $w'$ is also a \textit{child} of $w$. Furstenberg and Weiss showed that if $A$ has positive density then it contains arbitrarily large arithmetic subtrees, which we may define as follows.

\begin{mydef}\label{def: arithmetic subtree} An \textit{arithmetic subtree} in $A \subset \Lambda^*$ of order $r$ is a map $\phi:B_r(\Lambda^*) \to A$ such that there exists $q \in \bZ_{>0}$ such that for all $w \in B_{r-1}(\Lambda^*)$ and $\lambda \in \Lambda$ we have that \begin{enumerate}
	\item \label{q gap condition} $|\phi(w\lambda)| = |\phi(w)| + q$
	\item \label{splitting condition} The element $\phi(w\lambda)$ is a descendent of $\phi(w)\lambda$.
\end{enumerate}

We call $q$ the $\textit{gap}$ of this arithmetic subtree.

\end{mydef}
So an arithmetic tree of order $r$ is an embedding of the full finite $|\Lambda|$-ary tree of depth $r$ where each edge is mapped to a branch of the same length $q$ and at each vertex in the image we have a splitting into $|\Lambda|$ such non-overlapping branches.

\begin{thm}[Furstenberg-Weiss \cite{FWTrees}]\label{thm: Furstenberg-Weiss trees} If $A \subset \Lambda^*$ satisfies $\overline{d}(A)>0$, then for any $r \in \bZ_{>0}$ there exists an arithmetic subtree in $A$ of order $r$.

\end{thm}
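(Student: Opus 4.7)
The plan is to adapt Furstenberg's ergodic-theoretic proof of Szemer\'edi's theorem to the free, non-amenable semigroup $\Lambda^*$, through a correspondence principle followed by an induction on $r$.

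\textbf{Step 1 (Correspondence principle).} On the compact space $\Omega = \{0,1\}^{\Lambda^*}$ with the left-shift maps $(T_w x)(v) := x(wv)$, let $x_A = \mathbf{1}_A$ and $E = \{x \in \Omega : x(\emptyset) = 1\}$, and note the elementary identity: $T_{w_0} x_A \in \bigcap_{w \in F} T_w^{-1} E$ precisely when $w_0 \cdot F \subset A$. A weak-$*$ subsequential limit of the spherical orbit averages
\[
\mu_N = \frac{1}{N}\sum_{n=0}^{N-1}\frac{1}{|S_n(\Lambda^*)|}\sum_{w \in S_n(\Lambda^*)} \delta_{T_w x_A},
\]
taken along a subsequence realising $\overline{d}(A)$, yields a Borel probability measure $\mu$ on $\Omega$ with $\mu(E) = \overline{d}(A) > 0$ and with the \emph{averaged} invariance $\frac{1}{|\Lambda|}\sum_{\lambda} T_\lambda^*\mu = \mu$. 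Because finite intersections of cylinders are clopen, positivity of $\mu\bigl(\bigcap_{w \in F} T_w^{-1}E\bigr)$ transfers to positivity of $\mu_N$ on the same set for large $N$, producing some $w_0$ with $w_0 F \subset A$. It therefore suffices to prove the dynamical assertion: for every $r$ there exist $q \in \bZ_{>0}$ and $\sigma : B_r(\Lambda^*) \to \Lambda^*$ obeying (\ref{q gap condition})--(\ref{splitting condition}) with $\sigma(\emptyset) = \emptyset$ such that $\mu\bigl(\bigcap_{w \in B_r(\Lambda^*)} T_{\sigma(w)}^{-1}E\bigr) > 0$. Setting $\phi(w) = w_0 \sigma(w)$ then produces the desired subtree.

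\textbf{Step 2 (Induction on $r$).} The base $r = 0$ is just $\mu(E) > 0$. For the inductive step I would view an order-$r$ pattern as a root followed by $|\Lambda|$ independent order-$(r-1)$ patterns, one below each child of the root. For each $\lambda \in \Lambda$ separately, the restriction of $A$ to the $\lambda$-subtree of $\Lambda^*$ retains positive density on sufficiently many spheres, so the inductive hypothesis produces, for each $\lambda$, a candidate gap $q_\lambda$ and a completion of the corresponding branch.

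\textbf{Step 3 (Main obstacle).} The hard part is synchronising the $q_\lambda$ into a single common gap $q$ while preserving a positive-measure intersection across all $|\Lambda|$ directions. Since $\Lambda^*$ is free and non-amenable and the generators $T_\lambda$ do not commute, neither the Furstenberg--Katznelson multiple recurrence theorem nor any F\o lner-based averaging is available. The intended strategy is to combine a colour-focusing (Ramsey) argument on the countable set of admissible gaps across scales with the averaged invariance of $\mu$; one could alternatively extract a single common gap via IP-Szemer\'edi or ultrafilter ($\beta\Lambda^*$) techniques. This simultaneous recurrence along the $|\Lambda|$ incommensurate branch directions is the technical heart of the proof.
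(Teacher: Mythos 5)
Your Step~1 is essentially the right correspondence principle, and in fact the ``averaged invariance'' $\frac{1}{|\Lambda|}\sum_\lambda T_\lambda^*\mu=\mu$ that you derive is exactly the statement that $\mu$ is a \emph{stationary measure} for the $\Lambda$-Markov system on $\{0,1\}^{\Lambda^*}$ (with $(T_\lambda x)(v)=x(\lambda v)$ and $p_\lambda\equiv 1/|\Lambda|$), which is precisely the starting point of the Furstenberg--Weiss argument. So through Step~1 you are on the intended road.

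The genuine gap is Step~3. You correctly flag that the difficulty is to synchronise the gaps and get simultaneous recurrence along the non-commuting directions $T_\lambda$, but you then conclude that ``neither the Furstenberg--Katznelson multiple recurrence theorem nor any F\o lner-based averaging is available'' and cast about for Ramsey/IP/ultrafilter substitutes. This misdiagnoses the obstacle. Furstenberg and Weiss never attempt recurrence for the individual non-commuting $T_\lambda$'s; the whole point of the Markov formalism is that the single averaged operator $Pf=\frac{1}{|\Lambda|}\sum_\lambda f\circ T_\lambda$ is what you iterate, and the branching across all $\lambda$ is encoded not by separate recurrence problems but by a pointwise product over $\lambda$. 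The missing machinery in your proposal is (i) the \emph{endomorphic extension} $\widetilde M=M^{\bZ_{\le 0}}$, which supplies a continuous shift $S$ with $PS=\operatorname{I}$ while preserving stationarity (Proposition~\ref{prop: P S basics} and the one-variable analogue of Proposition~\ref{prop: common stationary measure}), (ii) the key identity $P(h\cdot Sg)=(Ph)\cdot g$, which is what lets the product over $\lambda$ collapse after applying $P$, and (iii) the convergence $S^nP^n=Q_n\to Q_\infty$, the conditional expectation onto the tail $\sigma$-algebra $\bigcap_n S^{-n}\cB$, on which $S$ becomes \emph{invertible} and measure-preserving. After projecting to $H_\infty$, the quantity whose positivity you must establish becomes an expression of the form $\int \varphi_\infty\prod_a(S^{-an}\varphi_\infty)\,d\mu$ for a multiset of exponents $a$, and this is exactly Furstenberg's ergodic Szemer\'edi theorem for a \emph{single} $\bZ$-action --- entirely amenable, so no IP or ultrafilter technology is needed. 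The synchronisation of gaps that worries you happens automatically, because all branches use the same recurrence time $n$ for the one transformation $S|_{H_\infty}$. Without introducing $S$, $Q_\infty$, and the identity (ii), Step~3 as written is not a proof, and the proposed colour-focusing/IP alternatives are not obviously viable and are in any case not the argument of \cite{FWTrees}. (There is also a short purely combinatorial proof due to Pach--Solymosi--Tardos \cite{PSTTrees}, but your proposal is not heading in that direction either.)

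Two smaller remarks. First, in your ``dynamical assertion'' you ask for a single $\sigma$ with $\mu(\bigcap_w T_{\sigma(w)}^{-1}E)>0$; what the ergodic argument naturally yields is a positive-measure set of points each admitting \emph{some} rooted tree with gap $q$, and one then gets a single $\sigma$ by pigeonholing over the countably many possibilities --- worth saying explicitly. Second, to verify clause~(\ref{splitting condition}) (the initial direction of each branch) the Markov setup typically carries an extra $\Lambda$ coordinate recording the last letter used, so that the maps $T_\lambda$ have disjoint images; your bare $\Omega=\{0,1\}^{\Lambda^*}$ does not record this, and you would need to enlarge the state space (as the paper does in its two-dimensional construction with $M=\cT\times\Lambda\times\Lambda$) before the path-tracking makes sense.
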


We note that the original proof of Furstenberg-Weiss used Ergodic Theory and Markov Systems, though an elegant and short combinatorial proof was later found by Pach-Solymosi-Tardos \cite{PSTTrees}.

As an example, notice that if $B \subset \bZ_{\geq 0}$ has positive upper density $$\overline{d}(B) := \limsup_{N \to \infty} \frac{|B \cap [0,N)|}{N}$$ then the set $$A_B = \bigcup_{b \in B} S_b(\Lambda^*)$$ satisfies that $\overline{d}(A_B) = \overline{d}(B)>0$, thus contains arbitrarily large arithmetic subtrees by this theorem and thus by definition, $B$ contains arbitrarily large arithmetic progressions. Thus one can recover Szemer\'edi's theorem from this theorem of Furstenberg-Weiss. Alternatively, one also sees that the case $|\Lambda| = 1$ is Szemer\'edi's theorem.

\subsection{Products of trees} As we have seen, the Furstenberg-Weiss theorem extends Szemer\'edi's theorem on Arithmetic Progressions. In this paper, we will investigate whether there is a multidimensional Furstenberg-Weiss theorem that extends the multidimensional Szemer\'edi theorem in an analogous way. 

We let $\Gamma = \Lambda^* \oplus \Lambda^*$ denote the direct sum of $\Lambda^*$ with itself, thus elements in $\Gamma$ are pairs $(w_1, w_2) \in \Lambda \times \Lambda$ and multiplication is performed ordinate-wise. We define the level of $\gamma = (w_1,w_2) \in \Gamma$, denoted by $\operatorname{Level}(\gamma) \in \bZ_{\geq 0}^2$, to be the pair $(|w_1|, |w_2|)$ were $|w|$ denotes the word length of $w \in \Lambda^*$. For $(i,j) \in \bZ_{\geq 0}^2$ we define the level set $$L_{i,j} = \{ \gamma \in \Gamma ~|~ \operatorname{Level}(\gamma) = (i,j) \}.$$

\begin{mydef} For $A \subset \Gamma$ we define the upper density $$\overline{d}(A) = \limsup_{N \to \infty} \frac{1}{N^2} \sum_{i,j=0}^{N-1} \frac{|A \cap L_{i,j}|}{|L_{i,j}|}.$$

\end{mydef}

Recall that the $2$-dimensional Szemer\'edi theorem can be formulated as saying that a set of positive density in $\bN^2$ contains a cartesian product of two arbitrarily long arithmetic progressions in $\bN$ of the same common difference. Thus it now makes sense to ask the following question.

\begin{question}\label{question: cartesian product trees} If $A \subset \Gamma$ with $\overline{d}(A)>0$ then does $A$ contain the cartesian product of two arbitrarily large arithmetic trees with the same gap? More precisely, if $r>0$ then does there exist a $q$ and two arithmetic trees $\phi_1,\phi_2:B_r(\Lambda^*) \to \Lambda^*$ with gap $q$ such that $(\phi_1(w_1), \phi_2(w_2)) \in A$ for all $w_1,w_2 \in B_r(\Lambda^*)$? \end{question}

\begin{remark}\label{remark: basic example} An affirmative answer immediately implies the two dimensional Szemer\'edi theorem as follows. If $B \subset \bN^2$ has positive upper density $$\overline{d}(B) = \limsup_{N \to \infty} \frac{|B \cap [0,N)^2|}{N^2}>0$$ then letting $A_B = \bigcup_{(i,j) \in B} L_{i,j}$ we see that $\overline{d}(A_B) = \overline{d}(B)>0$ and the existence of product of arithmetic trees of gap $q$ and order $r$ in $A_B$ implies the existence of the cartesian product of two arithmetic progessions of length $r$ and gap $q$ in $B$. 

\end{remark}

We remark that a different multidimendsional extension of the Furstenberg-Weiss theorem has been posed by Dodos and Kanellopoulos (Problem 6 in \cite{DKRamseyProduct}), though it instead considers positive density subsets of the \textit{level product} $\bigcup_{i \in \bN} L_{i,i}$ rather than the cartesian product of two trees.

We are unable to answer Question~\ref{question: cartesian product trees}, but we will turn to proving weaker extensions of the Furstenberg-Weiss theorem that still imply the multidimensional Szemer\'edi theorem in the same way as in Remark~\ref{remark: basic example}

\subsection{Arrays of horizontal trees} We state our first partial result concerning Question~\ref{question: cartesian product trees}. Note that an affirmative answer to this question would imply that for all integers $r>0$ there is some integer $q>0$ such that our positive density subset $A \subset \Gamma$ would contain $r$ horizontally embedded copies of an arithmetic subtree with gap $q$ and depth $r$ such that the vertical distance between these consecutive horizontal trees is $q$. Our first partial result says that this is indeed the case.

\begin{thm}\label{thm: weak approximation to cartesian product} For $A \subset \Gamma$ with $\overline{d}(A)>0$ and $r \in \bZ_{>0}$ there exists a positive integer $q$ and an injective map $\phi:B_r(\Lambda^*) \times \{0,1, \ldots r\} \to A$ such that the following hold.

\begin{enumerate}
	\item For some constants $c_1,c_2 \in \bZ_{\geq 0}$ we have $$\operatorname{Level}(\phi(u,j)) = (q|u| + c_1, qj + c_2) \quad\text{for all } u \in B_r(\Lambda^*), j \in \{0,1, \ldots, r\}.$$
	\item For each $j \in \{0,1 \ldots, r\}$, we have an arithmetic subtree $\phi_j: B_r(\Lambda^*) \to \Lambda^*$ with gap $q$ and $y_j \in \Lambda^*$ such that for all $u \in B_r(\Lambda^*)$, we have $\phi(u,j) = (\phi_j(u), y_j)$.
\end{enumerate}

\end{thm}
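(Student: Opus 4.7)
The plan is to chain together a quantitative form of Theorem \ref{thm: Furstenberg-Weiss trees} in the horizontal direction, a pigeonhole on the horizontal gap and starting level, and Szemer\'edi's theorem in the vertical direction. Throughout I would write $A_y := \{x \in \Lambda^* : (x,y) \in A\}$ for each $y \in \Lambda^*$.

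First I would unfold $\overline{d}(A)>0$ as a double average over $y \in \Lambda^*$ of $\overline{d}(A_y)$ and apply Markov's inequality twice to obtain $\delta>0$ for which the set $\tilde A := \{y : \overline{d}(A_y)\geq \delta\}$ has positive upper density in $\Lambda^*$. Next I would appeal to a quantitative form of Theorem~\ref{thm: Furstenberg-Weiss trees}, obtainable from it by a routine compactness argument: for each $\delta>0$ and $r_0 \in \bZ_{>0}$ there is $Q = Q(\delta, r_0, |\Lambda|)$ such that every $B \subset \Lambda^*$ with $\overline{d}(B)\geq\delta$ contains an arithmetic subtree of order $r_0$ with gap $\leq Q$ \emph{and} starting level $\leq Q$. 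Colouring each $y \in \tilde A$ by the pair $(q(y), s(y)) \in \{1,\ldots,Q\}^2$ of gap and starting level of such a subtree in $A_y$ and pigeonholing then produces $q^*, s^* \leq Q$ for which $\tilde A_{q^*,s^*} \subset \tilde A$ still has positive upper density.

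The set $B := \{\ell \geq 0 : \tilde A_{q^*, s^*} \cap S_\ell \neq \emptyset\}$ then has positive upper density in $\bZ_{\geq 0}$. I would intersect $B$ with a residue class modulo $q^*$ carrying positive relative density and apply Szemer\'edi's theorem to produce an $(r{+}1)$-term arithmetic progression $\ell_0, \ell_0+d, \ldots, \ell_0+rd \in B$ with $d$ a positive integer multiple of $q^*$, say $d = kq^*$. For each $j \in \{0,\ldots,r\}$ I would choose $y_j \in \tilde A_{q^*, s^*} \cap S_{\ell_0 + jd}$ and let $\Phi_j : B_{r_0}(\Lambda^*)\to A_{y_j}$ be the arithmetic subtree of gap $q^*$ and starting level $s^*$ given by the colouring. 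Provided $r_0 \geq rk$, the reparametrization $\phi_j(w_1\cdots w_n) := \Phi_j(w_1^k \cdots w_n^k)$ defines an arithmetic subtree of $A_{y_j}$ of order $r$, gap $d$, and starting level $s^*$; setting $q := d$, $c_1 := s^*$, $c_2 := \ell_0$, and $\phi(u,j) := (\phi_j(u), y_j)$ yields the required map, with injectivity immediate from $y_j \neq y_{j'}$ for $j \neq j'$ and the injectivity of each $\phi_j$.

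The main obstacle will be the coupled parameter selection: $Q$ depends on $r_0$ through the quantitative FW, while the multiplier $k$ extracted from Szemer\'edi depends on $q^* \leq Q$ and on the density of $B$, itself depending on $Q$. Using a quantitative Szemer\'edi bound to estimate $k$ in terms of $\delta$, $r$, and $q^*$, and then selecting $r_0$ iteratively large enough to dominate $rk$, resolves this coupling and allows the whole argument to go through.
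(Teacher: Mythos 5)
Your proposal breaks down at two specific points, both tied to trying to fix the horizontal data (gap and starting level) slice-by-slice before doing anything two-dimensional. First, the opening reduction is not valid: $\overline{d}(A)$ is a $\limsup$ of averages of the finitary slice densities $d_N(A_y)$, and you cannot pass this to a statement about the asymptotic densities $\overline{d}(A_y)$. For $A = \{(w_1,w_2) \in \Gamma : |w_1| \leq |w_2|\}$ one has $\overline{d}(A) = 1/2$, yet every slice $A_y$ is a finite set, so $\overline{d}(A_y) = 0$ for all $y$ and your set $\tilde A$ is empty for every $\delta>0$. Second, and more fundamentally, the ``quantitative form'' of Theorem~\ref{thm: Furstenberg-Weiss trees} you invoke is false: no bound $Q(\delta,r_0)$ on the \emph{starting level} can exist, since $B = \{w \in \Lambda^* : |w| \geq L\}$ has $\overline{d}(B)=1$ but every arithmetic subtree in $B$ is rooted at level $\geq L$ (the same obstruction appears in the finitary setting, e.g.\ $S = \{w \in T_N : |w| \geq N/2\}$). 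A bound on the gap alone can be extracted, but your argument genuinely needs the starting level in the pigeonhole: condition (1) of the theorem forces all $r+1$ horizontal trees to occupy the \emph{same} column levels $q|u|+c_1$, and without a bounded range of starting levels the colouring argument does not retain positive density. The coupling of parameters you flag at the end is a real but secondary nuisance; these two steps are where the argument fails.

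The paper's proof avoids exactly this trap. It works entirely finitarily in $T_N \times T_N$ with $d_N(A) \geq \delta$ (so the slice problem above disappears: one only needs $d_N(A_{y_j}) \geq \delta/2$ for $\gtrsim \delta N$ values of $j$), and instead of asking each slice for an \emph{arithmetic} subtree with prescribed parameters, it applies the Pach--Solymosi--Tardos theorem to get in each good slice a \emph{regular} embedding (levels mapped to levels, but with no arithmetic constraint) of depth $d \geq \epsilon(\delta) N$, i.e.\ linear in $N$. Recording only the sets of occupied column levels $L(B_j)$, the union $\bigcup_j L(B_j) \times \{j\}$ has density $\gtrsim \delta\epsilon$ in $\{1,\dots,N\}^2$, and a single application of the two-dimensional Szemer\'edi theorem then produces a product of two $ (r+1)$-term progressions with a common difference $q$: this simultaneously selects, inside each of $r+1$ regular embeddings sitting at row levels in arithmetic progression of gap $q$, the same arithmetic progression of column levels of gap $q$, after which one restricts the regular embeddings to those levels. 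In other words, the alignment of gaps and starting levels that you try (impossibly) to impose in advance by pigeonhole is obtained a posteriori from the two-dimensional Szemer\'edi step, and the linear depth supplied by the quantitative PST theorem is what makes that step available. If you want to salvage your outline, you would need to replace your quantitative FW statement with the PST regular-embedding theorem and let a multidimensional Szemer\'edi application do the aligning, which is essentially the paper's argument.
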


Observe that this result extends the two-dimensional Szemer\'edi theorem in $\bN^2$ by the same argument as given in Remark~\ref{remark: basic example}. The embedding provided by this result is much weaker than the sought cartesian product structure and it is natural to ask whether one can find stronger embeddings that more closely resemble a cartesian product such as ones where the $\phi_j$ are all equal.

\subsection{Free Products}
\label{section: Free Products}

We now turn to presenting our next multidimensional extension of Furstenberg-Weiss theorem that extends naturally the multidimensional Szemer\'edi theorem as in Remark~\ref{remark: basic example}. This will involve finding certain non-degenerate arithmetic embeddings of finite subsets of the free product $\Lambda^* \ast \Lambda^*$ into positive density subsets of $\Gamma = \Lambda \oplus \Lambda$.

If one views $\Gamma$ as the vertex set of the cartesian product of two trees, then each vertex $(w_1,w_2) \in \Gamma$ has horizontal children $(w_1\lambda, w_2)$ and vertical children $(w_1, w_2\lambda)$. Thus this (undirected) graph (the cartesian product of two trees) has cycles such as squares. We now pass to the universal cover to obtain an acyclic graph by considering the free product of $\Lambda^*$ as follows.

Recall that we consider $\Lambda^*$ to be the free semi-group with identity $\emptyset$ generated by $\Lambda$. The free product $\widetilde{\Gamma} = \Lambda^* \ast \Lambda^*$ is the free semigroup generated by $$\{ X_{\lambda} ~|~ \lambda \in \Lambda\} \sqcup \{Y_{\lambda} ~|~ \lambda \in \Lambda \}.$$ Thus every element in $\widetilde{\Gamma}$ can uniquely be written as a word (product) of elements from this set. We define the level of $\gamma \in \Gamma$, denoted by $\operatorname{Level}(\gamma) \in \bZ_{\geq 0}^2$, to be the pair $(n,m)$ where $n$ (resp. $m$) is the number of letters of the form $X_{\lambda}$ (resp. $Y_{\lambda}$) in the word $\gamma$. Let $B_r(\widetilde{\Gamma})$ denote the set of words in $\widetilde{\Gamma}$ of word length at most $r$. 
Recall that $\Gamma = \Lambda^* \oplus \Lambda^*$ denotes the direct sum of $\Lambda^*$ with itself, thus elements in $\Gamma$ are pairs $(w_1, w_2) \in \Lambda \times \Lambda$ and multiplication is performed ordinate-wise. Thus the natural homomorphism $\widetilde{\Gamma} \to \Gamma$ preserves levels. For $\gamma = (w_1,w_2) \in \Gamma$, we define $\gamma X_{\lambda} = (w_1 \lambda, w_2)$ and $\gamma Y_{\lambda} = (w_1, w_2 \lambda)$. In other words, we have a right action of $\Gamma \curvearrowleft \widetilde{\Gamma}$ where $\gamma \cdot \widetilde{\gamma}$ is the product of $\gamma \in \Gamma$ and the image of $\widetilde{\gamma} \in \widetilde{\Gamma}$ in $\Gamma$.

In a semigroup $S$, we say that $s' \in S$ is a \textit{descendent} of $s \in S$ if $s' = st$ for some $t \in S$.

Given $A \subset \Gamma$ and $u,v \in \bZ^2_{\geq 0}$, we define an \textit{$(u,v)$-arithmetic product tree of order $r$ in $A$} to be a map $\phi:B_r(\widetilde{\Gamma}) \to A$ such that

\begin{enumerate}
	\item For $\gamma \in B_{r-1}(\widetilde{\Gamma})$ and $\lambda \in \Lambda$  we have $$\operatorname{Level}(\phi(\gamma X_{\lambda})) = \operatorname{Level}(\phi(\gamma)) + u$$ and $$\operatorname{Level}(\phi(\gamma Y_{\lambda})) = \operatorname{Level}(\phi(\gamma)) + v.$$
	\item For $\gamma \in B_{r-1}(\widetilde{\Gamma})$ and $\lambda \in \Lambda$ we have that $\phi(\gamma X_{\lambda})$ is a descendent of $\phi(\gamma)X_{\lambda}$ and $\phi(\gamma Y_{\lambda})$ is a descendent of $\phi(\gamma)Y_{\lambda}$.
\end{enumerate}

Our main theorem may now be stated as follows.

\begin{thm}\label{thm: main thm on trees in A} Let $A \subset \Gamma$ be a subset with $\overline{d}(A)>0$ and let $u,v \in \bZ^2_{> 0}$. Then for $r>0$ there exists $n \in \bZ_{>0}$ such that there is an $(nu,nv)$-arithmetic product tree of order $r$ in $A$. \end{thm}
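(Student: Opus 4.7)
I would prove Theorem~\ref{thm: main thm on trees in A} by induction on $r$, combining the one-dimensional Furstenberg--Weiss theorem (Theorem~\ref{thm: Furstenberg-Weiss trees}) applied to horizontal and vertical slices of $\Gamma$ with a multidimensional Szemer\'edi-type coordination to arrange a common dilation parameter $n$. The inductive hypothesis should be strengthened to a density statement—namely, that the set of roots of $(nu,nv)$-arithmetic product trees of order $r$ in $A$ has positive density in $\Gamma$ for some $n$—since a naive inductive step would consume density and stall after finitely many rounds.

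The base case $r=0$ is immediate, as every element of $A$ is the root of an order-$0$ tree. For the inductive step, one takes an order-$r$ tree and extends by adding one layer: each leaf $\gamma$ at depth $r$ must acquire $2|\Lambda|$ children $\phi(\gamma X_\lambda),\phi(\gamma Y_\lambda)\in A$ with level increments $nu$ and $nv$ respectively, satisfying the descendent conditions. For a single leaf this is a local problem: the required $X_\lambda$-children lie in the horizontal slice of $A$ emanating from $\phi(\gamma)$, where Theorem~\ref{thm: Furstenberg-Weiss trees} furnishes arithmetic subtrees descending from $\phi(\gamma)$ with the prescribed gap in the first coordinate; a Fubini-style argument over the second coordinate then picks out extensions landing in $A$. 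The $Y_\lambda$-children are obtained symmetrically from the vertical slice.

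The key technical step is to arrange that a single common dilation parameter $n$ works simultaneously for all $(2|\Lambda|)^r$ depth-$r$ leaves across a positive-density family of order-$r$ trees. One sets up, for each candidate $n'\in\bZ_{>0}$ and each leaf, an indicator of ``extendability'' of that leaf with gap $n'$, then sums over leaves and trees and applies a pigeonhole/averaging argument, invoking the multidimensional Szemer\'edi theorem on an auxiliary subset of $\bZ^2$ encoding the set of admissible gaps. The assumption $u,v\in\bZ^2_{>0}$ is essential here: both coordinates of each level increment are nonzero, so the horizontal and vertical Furstenberg--Weiss applications both have genuine work to do, providing the flexibility needed for the density arguments to close.

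The main obstacle is precisely this coordination of $n$ across all branches. The free product structure of $\widetilde{\Gamma}$—as opposed to the direct product in Question~\ref{question: cartesian product trees}—is what makes it feasible: in $\widetilde{\Gamma}$ the $X$- and $Y$-branches do not interact via commutativity, so descendent choices in each branch can be made independently and then patched together without forcing unwanted identifications in $\Gamma$. I expect the bulk of the technical effort to lie in formulating the strengthened inductive hypothesis so that no density is lost at each extension step, and in the multidimensional Szemer\'edi coordination that yields a single $n$ governing all branches simultaneously.
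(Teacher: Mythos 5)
Your proposal takes a genuinely different route from the paper: the paper does not induct combinatorially on $r$ using the one-dimensional Furstenberg--Weiss theorem. Instead it builds two commuting $\Lambda$-Markov systems on a compact space of $\{0,1\}$-labelled trees $M = \{0,1\}^{\Lambda^* \times \Lambda^*} \times \Lambda \times \Lambda$ (with transition maps given by the right actions $X_\lambda, Y_\lambda$), produces a common stationary measure $\mu$ with $\mu(E) = \overline d(A)$ for the cylinder $E = \{\tau(\emptyset,\emptyset)=1\}$, passes to a \emph{common endomorphic extension} of the two systems, and then runs an $L^2$-recurrence argument: it defines functions $\varphi_r$ recursively, shows $\varphi_r$ is $L^2$-close to $\psi_r = \varphi_1 \prod_{a,b} W_1^{an}W_2^{bn}\varphi_\infty$ with $W_1 = P_1^{u_1}P_2^{u_2}$, $W_2 = P_1^{v_1}P_2^{v_2}$, and invokes the Furstenberg--Katznelson ergodic Szemer\'edi theorem for commuting transformations to show $\int \varphi_r\,d\mu > 0$ for arbitrarily large $n$. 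The positivity of $\int\varphi_r$ directly produces roots of order-$(r-1)$ trees, so the ``induction on $r$'' is internalized into the recursive definition of $\varphi_r$, not carried out as a density-loss combinatorial induction.

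More importantly, there is a genuine gap in your inductive step that the vague appeal to ``pigeonhole plus multidimensional Szemer\'edi on an auxiliary subset of $\bZ^2$'' does not close. Your strengthened hypothesis says ``the set of roots of $(nu,nv)$-arithmetic product trees of order $r$ has positive density \emph{for some $n$}.'' But to pass from order $r$ to order $r+1$ with a common gap $n'$, the order-$r$ subtree sitting below the root must itself have gap $n'$, not the $n$ produced at the previous stage; so you would actually need the hypothesis to hold for a rich set of $n$'s, and you would need to coordinate that rich set across the $(2|\Lambda|)^r$ leaves simultaneously. Nothing in your proposal establishes richness in $n$, and the multidimensional Szemer\'edi theorem (which finds patterns in a fixed dense subset of $\bZ^2$) is not the right tool to enforce this simultaneity --- this is exactly the job done in the paper by the ergodic Szemer\'edi theorem for commuting transformations applied on the factor $H_\infty$, which yields $\int Q_\infty\psi_r > 0$ uniformly for all large $n$. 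There is also a more basic mismatch: since $u,v\in\bZ^2_{>0}$, the $X_\lambda$-children of a leaf at level $(i,j)$ sit at level $(i+nu_1, j+nu_2)$ with $nu_2>0$, so they do \emph{not} lie in the horizontal slice at height $j$, and the one-dimensional Furstenberg--Weiss theorem applied to slices does not see them. Your slicing-plus-Fubini picture fits the weaker Theorem~\ref{thm: weak approximation to cartesian product} (rows of trees with no vertical branching), which the paper does prove combinatorially via Pach--Solymosi--Tardos and two-dimensional Szemer\'edi, but it does not produce the fully branched arithmetic product trees of Theorem~\ref{thm: main thm on trees in A}.
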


\begin{question}\label{question: drop positivity} Can the hypothesis $u,v \in \bZ^2_{> 0}$ be weakened to $u,v \in \bZ_{\geq 0}$? In particular, can we take $u=(1,0)$ and $v=(0,1)$?

\end{question} 

\begin{remark} Let us now show how an affirmative answer to our original Question~\ref{question: cartesian product trees} implies an affirmative answer to this question. Finding a cartesian product of two arithmetic product trees (in $\Lambda^*$) of order $r$ and gap $q$ inside $A$ means that we have a mapping $\phi' = B_r(\Lambda^*) \times B_r(\Lambda^*) \to A$ such that $\phi'(x,y) = (\phi_1(x), \phi_2(x))$ for some $\phi_1,\phi_2:  B_r(\Lambda^*) \to \Lambda^*$ that are arithmetic trees with gap $q$ and of order $r$. But then if $\pi:B_r(\widetilde{\Gamma}) \to B_r(\Lambda^*) \times B_r(\Lambda^*) $ denotes the natural homomorphism $\widetilde{\Gamma} \to \Gamma$ restricted to $B_r(\widetilde{\Gamma})$, then $\phi = \phi' \circ \pi$ is in fact a $(qu,qv)$-arithmetic product tree of order $r$ for $u = (1,0)$ and $v=(0,1)$. So indeed an affirmative answer to Question~\ref{question: cartesian product trees} implies an affirmative answer to Question~\ref{question: drop positivity} for this $u$ and $v$, but in fact it is not hard to extend this argument to show that it would answer Question~\ref{question: drop positivity} completely. In other words, Question~\ref{question: cartesian product trees} asks whether the $(u,v)$-arithmetic product trees can be chosen so that they factor through the natural homomorphism $\widetilde{\Gamma} \to \Gamma$. 

\end{remark}

\begin{remark} It is not hard to see that Theorem~\ref{thm: main thm on trees in A} implies the multidimensional Szemer\'edi theorem as follows. Let $B \subset \bZ_{\geq 0}^2$ be a set of positive density and construct $A_B \subset \Gamma$ as in Remark~\ref{remark: basic example} so that $\overline{d}(A_B)>0$. Choosing arbitrary $r>0$ and setting $u=(2,1), v=(1,2)$ we see, by Theorem~\ref{thm: main thm on trees in A}, that for some $q>0$ the set $A_B$ contains a $(qu,qv)$-arithmetic product tree of order $r$. This means that $B$ contains a set of the form $$ \Delta = \{ x qu + y qv + \alpha ~|~ x,y \in \bZ_{\geq 0}\text{ and } x+y \leq r \},$$ where $\alpha \in \bZ_{\geq 0}^2$ is the level of the the root of this arithmetic product tree. But it is not hard to see that this is equivalent to the two dimensional Szemer\'edi theorem as for $r$ large enough $\Delta$ contains a cartesian product of two arbitrarily large arithmetic progressions of the same gap.
\end{remark}

\subsection{Outline of the paper} In Section~\ref{section: array of trees proof} we will provide a proof of Theorem~\ref{thm: weak approximation to cartesian product}, which uses a quantitative extension of Theorem~\ref{thm: Furstenberg-Weiss trees} due to Pach-Solymosi-Tardos \cite{PSTTrees}. The remaining sections are devoted to the proof of Theorem~\ref{thm: main thm on trees in A}, which relies on extending the techniques of Furstenberg-Weiss \cite{FWTrees} on Markov systems. Our exposition will be self contained and we use some slightly different definitions that will be more convenient in our more complex situation. The main novelty that we introduce is the notion of a \textit{common endomorphic extension of two commuting Markov systems}, which naturally extend the endomorphic extensions used by Furstenberg-Weiss in \cite{FWTrees}.

\textbf{Acknowledgement:} The authors were partially supported by by the Australian Research Council grant DP210100162.

\section{Arrays of trees (Proof of Theorem~\ref{thm: weak approximation to cartesian product})}
\label{section: array of trees proof}

In this section, we let $\Lambda$ be a finite set and we will let $T_N := B_N(\Lambda^*) = \{ w \in \Lambda^* ~|~ |w| \leq N\}$ denote the $|\Lambda|$-ary tree with $N+1$ levels and $L_i = \{ x \in \Lambda^* ~|~ |x|=i\}$.
If $A \subset T_N \times T_N \subset \Lambda^* \times \Lambda^*$ then we define the density $$d_N(A) = \frac{1}{N^2} \sum_{0\leq i,j\leq N-1} \frac{|A \cap (L_i \times L_j)|}{|L_i \times L_j|} $$ which can conveniently be rewritten as follows $$d_N(A) = \frac{1}{N^2} \sum_{(x,y) \in A} |\Lambda|^{-|x|-|y|}.$$

Defining, for $y \in \Lambda^*$, the horizontal slice $A_y = \{ x \in \Lambda^* ~|~ (x,y) \in A\}$ we have that \begin{align} \label{eq: fubini density} d_N(A) = \frac{1}{N} \sum_{j=0}^{N-1} |\Lambda|^{-j} \sum_{|y|=j} d_N(A_y) \end{align} where we also use $d_N$ to denote the density on the one-dimensional tree (it will be clear from the context which density we use), so $$d_N(A_y) = \frac{1}{N}\sum_{x \in A_y} |\Lambda|^{-|x|} = \frac{1}{N}\sum_{i=0}^{N-1} \frac{|A_y \cap L_i|}{|L_i|}. $$

We will need the following notion introduced by Pach-Solymosi-Tardos in \cite{PSTTrees}, which is a relaxation of the notion of an arithmetic subtree introduced by Furstenberg-Weiss \cite{FWTrees}.

\begin{mydef} A \textit{regular embedding} $\phi:T_d \to T_N$ is an injective mapping such that 
\begin{enumerate}
	\item $\phi$ maps a level into a single level, i.e., if $w,w' \in T_d$ satisfy $|w| = |w'|$ then $|\phi(w)| = |\phi(w')|$.
	\item For $w \in T_{d-1}$ and $\lambda \in \Lambda$ we have that $\phi(w\lambda)$ is a descendent of $\phi(w)\lambda$.
\end{enumerate}

Note that if $\phi$ also satisfies that $|\phi(w\lambda)| = |\phi(w)| + q$ for all $w \in T_{d-1}$ and $\lambda \in \Lambda$ then we say that $\phi$ is \textit{arithmetic with gap $q$}, as in Definition~\ref{def: arithmetic subtree}. The following powerful result is implicit for $\Lambda = \{0,1\}$ in the work of Pach-Solymosi-Tardos \cite{PSTTrees} (it is exactly the statement regarding ``\textit{The set of levels occupied by the elements of $H_d$ in $T_n$...''} in their proof of Theorem B) and for arbitrary finite $\Lambda$ it is covered by Theorem 9.62 in \cite{DKRamseyProduct}, see also Equation (22) of \cite{DKTDense}.

\end{mydef}

\begin{thm}[Pach-Solymosi-Tardos \cite{PSTTrees}, see Theorem 9.6.2 in \cite{PSTTrees} for details] \label{thm: PST regular embedding} If $\delta>0$ then there exists an $\epsilon>0$ such that for sufficiently large $N$ we have that if $S \subset T_N$ with $d_N(S) \geq \delta$ then there exists a regular embedding $\phi:T_d \to T_N$ with $\phi(T_d) \subset S$ and $d \geq \epsilon N$.

\end{thm}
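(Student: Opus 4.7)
The plan is to prove this result by a greedy, inductive construction of the regular embedding, adding one level at a time while maintaining a density invariant on the remaining candidate subtrees. The inductive statement is: there exist positive constants $c_1, c_2$ (depending on $\delta$ and $|\Lambda|$) such that whenever $N \geq c_1$ and $S \subseteq T_N$ satisfies $d_N(S) \geq \delta$, there is a regular embedding $\phi : T_d \to T_N$ with $\phi(T_d) \subseteq S$ and $d \geq c_2 N$.

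For the one-step construction, first use the averaging identity~\eqref{eq: fubini density} together with pigeonhole to locate a low level $\ell_0 \leq c N$ at which $|S \cap L_{\ell_0}| \geq (\delta/2)|L_{\ell_0}|$. For any vertex $v$ at level $\ell_0$ and each $\lambda \in \Lambda$, define the shifted sub-density $\rho(v, \lambda) = d_{N - \ell_0 - 1}(S_{v\lambda})$, where $S_{v\lambda} = \{u \in \Lambda^* : v\lambda u \in S\}$. A Fubini computation shows that the average of $\rho(v, \lambda)$ over $v \in L_{\ell_0}$ and $\lambda \in \Lambda$ equals the tail density of $S$, which is $\delta - O(\ell_0 / N)$. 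A quantitative pigeonhole (a reverse Markov inequality for bounded random variables) then yields a positive proportion of $v \in L_{\ell_0}$ for which all $\rho(v, \lambda) \geq \delta'$ for some $\delta' = \delta'(\delta, |\Lambda|) > 0$; combining with the lower bound $|S \cap L_{\ell_0}| \geq (\delta/2) |L_{\ell_0}|$ one can then further select such a $v_0$ that lies in $S$.

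Having chosen $v_0 = \phi(\emptyset) \in S$, recurse inside each of the $|\Lambda|$ subtrees rooted at $v_0 \lambda$ by applying the inductive hypothesis to $S_{v_0 \lambda}$ at density $\delta'$, obtaining regular embeddings of $T_{d-1}$ in each. A technical subtlety is that these $|\Lambda|$ recursive embeddings must agree on the levels of their images so they can be combined into a single regular embedding of $T_d$; this is resolved by producing slightly deeper embeddings in each subtree and then truncating them to a common depth, exploiting the fact that internal vertices of a regular embedding retain many surviving descendants at deeper levels.

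The main obstacle is attaining the linear-in-$N$ bound $d \geq c_2 N$ rather than a weaker one. A naive density-decrement recursion in which $\delta$ degrades by a constant factor at each step would yield only $d = O(\log N)$. The Pach-Solymosi-Tardos insight is to calibrate the quantitative pigeonhole so that the density $\delta'$ passed to the recursive call depends only on $\delta$ and $|\Lambda|$, not on the current recursion depth, so that a single positive density parameter persists through all the recursive layers. Executing this uniform density preservation across linearly many recursive steps, while balancing the vertical cost $\ell_0$ against the surviving ambient depth at each stage, is the real content of the theorem.
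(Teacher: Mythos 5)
The paper does not actually prove this statement: it is imported from Pach--Solymosi--Tardos \cite{PSTTrees} (see also Theorem 9.62 of \cite{DKRamseyProduct}), so the only question is whether your sketch stands on its own. It does not; the gaps sit exactly where the difficulty of the theorem is.

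First, the one-step selection is invalid. Knowing that the average of $\rho(v,\lambda)$ over pairs $(v,\lambda)$ is about $\delta$ does not yield a positive proportion of $v$ for which \emph{all} $|\Lambda|$ values $\rho(v,\lambda)$ are at least $\delta'$: take $|\Lambda|=2$ and let half of the vertices $v$ at level $\ell_0$ have the subtree above $v1$ entirely inside $S$ and the subtree above $v2$ disjoint from $S$, the other half the reverse. The average is $1/2$, yet no vertex at that level is dense in both directions, so your greedy step has no root to pick. A reverse Markov inequality controls one coordinate at a time; simultaneous density in all $|\Lambda|$ directions at a prescribed level is precisely the Halpern--L\"auchli-type obstruction that the genuine proofs must circumvent, and it cannot be extracted by averaging alone. (The further intersection with $S\cap L_{\ell_0}$ would also need the two proportions to sum to more than $1$, which you do not have.) Second, even granting the step, the recursion does not assemble: the $|\Lambda|$ recursive calls inside the subtrees rooted at the $v_0\lambda$ return regular embeddings occupying $|\Lambda|$ unrelated sets of ambient levels, whereas a regular embedding of $T_d$ requires every branch to occupy the \emph{same} levels. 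Truncating to a common depth does not fix this --- you need a common subsequence of levels, and two level sets of size $\epsilon N$ inside $\{1,\dots,N\}$ can be disjoint once $\epsilon<1/2$; this forces a global, all-subtrees-at-once choice of levels rather than independent per-subtree recursions. Finally, the linear bound $d\geq \epsilon N$ is exactly the point you defer: you assert that the pigeonhole can be ``calibrated'' so that $\delta'$ depends only on $\delta$ and $|\Lambda|$ uniformly over linearly many steps, but give no mechanism, and the step you actually describe loses a constant factor per level, which, as you note, only reaches depth $O(\log N)$. As written, the proposal reduces the theorem to its hardest part without proving it.
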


We will now use this to show the following result, which immediately implies Theorem~\ref{thm: weak approximation to cartesian product} as $d(A) = \limsup_{N \to \infty} d_N(A \cap (T_N \times T_N))$ for $A \subset \Gamma$.

\begin{thm}\label{thm: weak cartesian product finite} Fix $\delta>0$ and $r \in \bZ_{>0}$ . Then for arbitrarily large $N \in \bZ_{>0}$ we have the following: Let $A \subset T_N \times T_N$ with $d_N(A) \geq \delta$, then there exists a mapping $\phi:T_r \times \{0,1, \ldots r\} \to A$ such that the following hold.

\begin{enumerate}
	\item For some constants $c_1,c_2 \in \bZ_{\geq 0}$ we have $$\operatorname{Level}(\phi(u,j)) = (q|u| + c_1, qj + c_2) \quad\text{for all } u \in T_r, j \in \{0,1, \ldots, r\}.$$
	\item For each $j \in \{0,1 \ldots, r\}$, we have a regular embedding $\phi_j: T_r \to T_N$ which is arithmetic with gap $q$ and $y_j \in T_N$ such that for all $u \in T_r$, we have $\phi(u,j) = (\phi_j(u), y_j)$.
\end{enumerate}

\end{thm}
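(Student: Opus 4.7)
The strategy is to apply Theorem~\ref{thm: PST regular embedding} twice — first to the ``good rows'' of $A$, then to each slice $A_y$ — and then to reduce the remaining problem to finding a homothetic grid in a dense planar subset of $\{0, \ldots, N-1\}^2$ via the Furstenberg--Katznelson multidimensional Szemer\'edi theorem.

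First I would set $Y = \{y \in T_N : d_N(A_y) \geq \delta/2\}$; a Markov-type averaging argument using \eqref{eq: fubini density} together with $d_N(A_y) \leq 1$ yields $d_N(Y) \geq \delta/2$. Applying Theorem~\ref{thm: PST regular embedding} to $Y$ then produces, for $N$ sufficiently large, a regular embedding $\psi: T_D \to T_N$ with image in $Y$ and $D \geq \epsilon_1 N$; let $L_\psi \subset \{0, \ldots, N-1\}$ denote its level set, so $|L_\psi| \geq \epsilon_1 N$. For each $y \in \psi(T_D)$ we have $d_N(A_y) \geq \delta/2$, so a second application of Theorem~\ref{thm: PST regular embedding} yields a regular embedding $\sigma_y: T_{D'} \to A_y$ with $D' \geq \epsilon_2 N$ and level set $S_y$ of size at least $\epsilon_2 N$.

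Next I would pick, for each $\ell \in L_\psi$, some $z_\ell \in \psi(T_D) \cap L_\ell$, and form the planar set
\[
E \;=\; \bigl\{ (\ell, c) \in L_\psi \times \{0, \ldots, N-1\} : c \in S_{z_\ell} \bigr\} \;\subset\; \{0, \ldots, N-1\}^2.
\]
Since $|E| = \sum_{\ell \in L_\psi} |S_{z_\ell}| \geq \epsilon_1 \epsilon_2 N^2$, the set $E$ has density at least $\epsilon_1 \epsilon_2$ in $\{0, \ldots, N-1\}^2$. The multidimensional Szemer\'edi theorem of Furstenberg--Katznelson (the $d = 2$ homothetic-grid case) then guarantees, for $N$ sufficiently large depending on $\epsilon_1, \epsilon_2, r$, that $E$ contains a grid
\[
\bigl\{ (c_2 + q j,\, c_1 + q k) : 0 \leq j, k \leq r \bigr\} \;\subset\; E
\]
with \emph{the same} common difference $q$ in both coordinates.

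Finally, I would set $y_j := z_{c_2 + q j}$, so that the grid condition becomes $c_1 + q k \in S_{y_j}$ for all $j, k \in \{0, \ldots, r\}$, and then extract an arithmetic subtree $\phi_j: T_r \to A_{y_j}$ of gap $q$ starting at level $c_1$ from $\sigma_{y_j}$ by a standard padding construction: if $i_0 < i_1 < \cdots < i_r$ are the depths in $T_{D'}$ that $\sigma_{y_j}$ sends to the levels $c_1, c_1+q, \ldots, c_1+rq$ and $\lambda_0 \in \Lambda$ is a fixed letter, one defines $\iota_j: T_r \to T_{D'}$ recursively by $\iota_j(\emptyset) = \lambda_0^{i_0}$ and $\iota_j(v\lambda) = \iota_j(v)\lambda\lambda_0^{i_{|v|+1}-i_{|v|}-1}$, and sets $\phi_j := \sigma_{y_j} \circ \iota_j$. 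The map $\phi(u, j) := (\phi_j(u), y_j)$ then satisfies properties (1) and (2) and is injective because the $y_j$'s lie in pairwise distinct levels $c_2 + q j$. \textbf{The main obstacle} is the two-dimensional Szemer\'edi step above: matching the horizontal gap (between consecutive levels of each $\phi_j$) with the vertical gap (between levels of consecutive $y_j$'s) is precisely what forces a genuinely two-dimensional density argument, since one-dimensional Szemer\'edi applied separately to $L_\psi$ and to each $S_y$ would yield APs with unrelated common differences.
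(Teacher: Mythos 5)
Your proposal is correct and follows essentially the same route as the paper's proof: pass to dense horizontal slices, apply Theorem~\ref{thm: PST regular embedding} to each chosen slice, apply the two-dimensional Szemer\'edi theorem to the positive-density planar set of occupied (level, row) pairs to get a grid with a common gap $q$, and then cut the regular embeddings down to arithmetic subtrees of gap $q$. The only differences are cosmetic: the paper gets its $\geq \frac{\delta}{2}N$ good rows $y_j$ by a simple averaging/pigeonhole lemma rather than your extra invocation of Theorem~\ref{thm: PST regular embedding} on $Y$ (which is valid but heavier than needed), and it leaves implicit the padding construction that you spell out for the final ``suitable restriction'' step.
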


\begin{lemma} If $0 \leq a_1, \ldots, a_N \leq 1$ satisfy $\frac{1}{N} \sum_{i=1}^N a_i \geq \delta$ then for some $J \subset \{1,\ldots, N\}$ with $|J|\geq \frac{\delta}{2}N$ we have that $a_j \geq \frac{1}{2}\delta$ for all $j \in J$.

\end{lemma}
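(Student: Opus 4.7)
The plan is a standard first-moment (Markov-type) averaging argument. I would simply take $J$ to be the set of indices where $a_j$ meets the threshold, namely
\[
J := \bigl\{ j \in \{1,\ldots,N\} ~:~ a_j \geq \tfrac{1}{2}\delta \bigr\},
\]
and show that the complement cannot be too large, for otherwise the average would drop below $\delta$.

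Concretely, I split the sum $\sum_{i=1}^N a_i$ into contributions from $J$ and from $J^c$. Using $a_i \leq 1$ on $J$ and $a_i < \tfrac{1}{2}\delta$ on $J^c$, I obtain
\[
\delta N \;\leq\; \sum_{i=1}^N a_i \;=\; \sum_{i \in J} a_i + \sum_{i \in J^c} a_i \;\leq\; |J| + (N - |J|)\tfrac{1}{2}\delta \;\leq\; |J| + \tfrac{1}{2}\delta N.
\]
Rearranging gives $|J| \geq \tfrac{1}{2}\delta N$, which is precisely the claim.

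There is no real obstacle here: the statement is a one-line pigeonhole argument, and the only thing to be mindful of is that the bound $a_i \leq 1$ is used on the ``heavy'' side while the threshold $\tfrac{1}{2}\delta$ is used on the ``light'' side, so that the two $\tfrac{1}{2}\delta N$ terms combine correctly to yield the factor $\tfrac{1}{2}$ in the conclusion. I expect the lemma to be invoked later in the proof of Theorem~\ref{thm: weak cartesian product finite} in order to pass from the averaged density bound~\eqref{eq: fubini density} to a positive-density set of horizontal slices $A_y$ on which one may subsequently apply Theorem~\ref{thm: PST regular embedding} to extract regular embeddings simultaneously.
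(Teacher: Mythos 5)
Your proof is correct: taking $J = \{j : a_j \geq \tfrac{1}{2}\delta\}$ and bounding the sum by $|J| + \tfrac{1}{2}\delta N$ gives the claim immediately, and you also correctly identify how the lemma is used in the proof of Theorem~\ref{thm: weak cartesian product finite}. The paper states this lemma without proof, treating it as immediate, and your one-line averaging argument is exactly the intended justification.
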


\begin{proof}[Proof of Theorem~\ref{thm: weak cartesian product finite}] Now fix $\delta>0$ and suppose that $A \subset T_N \times T_N$ satisfies $d_N(A) \geq \delta$. Thus applying this Lemma to the sum (\ref{eq: fubini density}) we have a set $J \subset \{1, \ldots N\}$ with $|J|\geq \frac{\delta}{2}N$ such that for all $j \in J$ we have that $$|\Lambda|^{-j} \sum_{|y|=j} d_N(A_y)  \geq \frac{\delta}{2}$$ and thus there exists a $y_j \in L_j$ with $d_N(A_{y_j})\geq \frac{\delta}{2}$. By Theorem~\ref{thm: PST regular embedding} there exists an $\epsilon = \epsilon(\delta) > 0$, such that, assuming $N$ is sufficiently large, we have a regular embedding $\phi_j: T_d \to A_{y_j}$, where $d = \lceil \epsilon N \rceil$. Say $B_j \subset A_{y_j}$ is the image of this regular embedding $\phi_j$ and we let $L(B_j) = \{ |x| ~|~ x \in B_j\}$. Now the set $\bigcup_{j} L(B_j) \times \{j\}$ is a subset of $\{1, \ldots, N\} \times \{1, \ldots, N\}$ with density at least $\frac{\delta}{2} \cdot \frac{d}{N} \geq \frac{\delta}{2} \epsilon(\delta) > 0$. So by the two dimensional Szemer\'edi theorem, if $N$ is sufficiently large then $\bigcup_{j} L(B_j) \times \{j\} \subset \{1, \ldots, N\}^2$ contains a cartesian product of two arithmetic progressions of length $r$ with the same common difference $q>0$. Thus our mapping $$\phi:T_r \times \{0,1, \ldots r\} \to \bigcup B_j \times \{j\}$$ may be constructed from a suitable restriction of the mapping $(u,j) \mapsto (\phi_j(u), j)$. \end{proof}

\section{Markov Systems}

\begin{mydef} A $\Lambda$-Markov system $(M, T, p)$ consists of a compact metric space $M$ and continuous maps $T_{\lambda}:M \to M$ and $p_{\lambda}:M \to [0,1]$ for each $\lambda \in \Lambda$ such that $$\sum_{\lambda \in \Lambda} p_{\lambda}(x) = 1 \quad \text{for all } x \in M.$$ The associated Markov Operator $P:C(M) \to C(M)$ is defined by $$(Pf)(x) = \sum_{\lambda \in \Lambda} p_{\lambda}(x)f(T_{\lambda}x).$$ We say that $(M,T,p)$ is endomorphic with respect to a continuous $S:M \to M$ if whenever $p_{\lambda}(x) >0$ then $S(T_{\lambda}x) = x$. \end{mydef}

Let us recall the notion of the Endomorphic Extension of a Markov system introduced in \cite{FWTrees}.

\begin{mydef} The \textit{Endomorphic Extension} of a $\Lambda$-Markov system $(M, T, p)$ is the $\Lambda$-Markov system $(\widetilde{M}, \widetilde{T}, \widetilde{p})$ given by $$\widetilde{M} = M^{\bZ_{\leq 0}},$$ $$\widetilde{p}_{\lambda}(x) = p_{\lambda}(x_0) \quad \text{for } x = (\ldots, x_{-1},x_0) \in M$$ and $$\widetilde{T}_{\lambda}(\ldots, x_{-2}, x_{-1}, x_0) = (\ldots ,x_{-1}, x_{0},T_{\lambda}x_0).$$

\end{mydef}

\begin{lemma} The $\textit{Endomorphic extension}$ is endomorphic with respect to the shift map $\widetilde{S}:\widetilde{M} \to \widetilde{M}$ given by $\widetilde{S}(\ldots, x_1, x_0) = (\ldots, x_2, x_1)$. \end{lemma}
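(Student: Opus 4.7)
The plan is to verify the endomorphic condition by directly unwinding the definitions, noting in fact that the identity $\widetilde{S}(\widetilde{T}_\lambda x) = x$ holds unconditionally, so the hypothesis $\widetilde{p}_\lambda(x) > 0$ in the definition of ``endomorphic'' never needs to be invoked. The central combinatorial observation is that $\widetilde{T}_\lambda$ and $\widetilde{S}$ are designed to be inverses in the relevant direction: $\widetilde{T}_\lambda$ shifts a sequence in $\widetilde{M} = M^{\bZ_{\leq 0}}$ one slot to the left and writes $T_\lambda x_0$ into the new zeroth coordinate, while $\widetilde{S}$ deletes the zeroth coordinate and shifts everything one slot back to the right.

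Concretely, I would take an arbitrary $x = (\ldots, x_{-2}, x_{-1}, x_0) \in \widetilde{M}$ and $\lambda \in \Lambda$, apply the definition of $\widetilde{T}_\lambda$ to obtain $\widetilde{T}_\lambda x = (\ldots, x_{-1}, x_0, T_\lambda x_0)$, and then apply $\widetilde{S}$ to strip off the rightmost entry and recover $(\ldots, x_{-2}, x_{-1}, x_0) = x$. This immediately gives the required endomorphic identity for every $x$ and every $\lambda$, so in particular whenever $\widetilde{p}_\lambda(x) > 0$.

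There is no real obstacle in the argument; the only background point worth addressing for completeness is that $\widetilde{S}$ must be continuous, as the definition of endomorphic requires. This is immediate once one notes that $\widetilde{M} = M^{\bZ_{\leq 0}}$ is equipped with the product topology (compact and metrizable since $M$ is a compact metric space and the index set is countable), and each output coordinate of $\widetilde{S}(x)$ is literally a single coordinate of $x$, so $\widetilde{S}$ is a continuous relabelling of indices.
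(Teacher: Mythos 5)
Your proof is correct and is the direct verification one expects; the paper in fact states this lemma without proof, and your argument supplies exactly the routine check that is being left to the reader: $\widetilde{S}(\widetilde{T}_\lambda x) = \widetilde{S}(\ldots, x_{-1}, x_0, T_\lambda x_0) = (\ldots, x_{-1}, x_0) = x$ for every $x$ and $\lambda$, so the endomorphic condition holds unconditionally, and continuity of $\widetilde{S}$ is immediate from the product topology on $M^{\bZ_{\leq 0}}$.
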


We will now extend this notion by defining common Endomorphic Extension for two commuting Markov systems on the same space.

\begin{mydef} We say that a $\Lambda$-Markov system $(M,T,p)$ commutes with a $\Lambda'$-Markov system $(M,T', p')$ if $T_{\lambda}T'_{\lambda'} = T'_{\lambda'}T_{\lambda}$ for all $\lambda \in \Lambda$, $\lambda' \in \Lambda'$. \end{mydef}

Suppose that $(M,T,p)$ is a $\Lambda$-Markov system that commutes with a $\Lambda'$-Markov system $(M,T',p')$. Then their \textit{common endomorphic extension} is the space $\widetilde{M} = M^{\bZ_{\leq 0}^2}$. Let $x_{i,j} \in M$ denote the $(i,j)$ component of $x \in \widetilde{M}$. Define $\widetilde{T}_{\lambda}:\widetilde{M} \to \widetilde{M}$ by $(\widetilde{T}_{\lambda}x)_{i,j} = x_{i+1,j}$ for $i<0$ and $(\widetilde{T}_{\lambda})_{0,j} = T_{\lambda}x_{0,j}$. In other words, $\widetilde{T}_{\lambda}$ shifts horizontally to the left and inserts $T_{\lambda}(x_{0,j})$ to the vacant $0,j$ component. We define $\widetilde{S}:\widetilde{M} \to \widetilde{M}$ to be the right shift $(\widetilde{S})_{i,j} = x_{i - 1,j}$. Thus we have a $\Lambda$-Markov system $(\widetilde{M}, \widetilde{T}, \widetilde{p})$ endomorphic with respect to $\widetilde{S}$ where $\widetilde{p}_{\lambda}(x) = p_{\lambda}(x_{0,0})$. Similairly, we have a $\Lambda'$-Markov system $(\widetilde{M}, \widetilde{T'}, \widetilde{p'})$ that is endomorphic with respect to the vertical shift $\widetilde{S'}:\widetilde{M} \to \widetilde{M}$ given by $(\widetilde{S'}x)_{i,j} = x_{i, j - 1}$. Here $(\widetilde{T'}x)_{i,j} = x_{i,j+1}$ for $j<0$ and $(\widetilde{T'}x)_{i,0} = (Tx)_{i,0}$; with $\widetilde{p'}_{\lambda}(x) = p'_{\lambda}(x_{0,0})$.

\begin{lemma} For $\lambda \in \Lambda$ and $\lambda' \in \Lambda'$ we have that $\widetilde{T_{\lambda}}$ and $\widetilde{T'_{\lambda'}}$ commute.
\end{lemma}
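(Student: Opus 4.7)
The plan is to verify commutativity by direct componentwise computation on $\widetilde{M} = M^{\bZ_{\leq 0}^2}$. Fix $x \in \widetilde{M}$ and $\lambda \in \Lambda$, $\lambda' \in \Lambda'$. Unwinding the definitions, I would split into four cases according to whether each of $i,j \in \bZ_{\leq 0}$ equals $0$ or is strictly negative.

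First compute $y = \widetilde{T'}_{\lambda'} x$ and then $\widetilde{T}_\lambda y$. In the interior case $i<0, j<0$, both operators just act as index shifts, so $(\widetilde{T}_\lambda \widetilde{T'}_{\lambda'} x)_{i,j} = x_{i+1,j+1}$. In the case $i<0, j=0$, the horizontal shift lands on column $i+1<0$ which was already filled by $\widetilde{T'}_{\lambda'}$ with $T'_{\lambda'} x_{i+1,0}$, giving $T'_{\lambda'}x_{i+1,0}$. In the case $i=0, j<0$, the horizontal insertion applies $T_\lambda$ to $y_{0,j} = x_{0,j+1}$, giving $T_\lambda x_{0,j+1}$. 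Finally, at $(0,0)$ we get $T_\lambda y_{0,0} = T_\lambda T'_{\lambda'} x_{0,0}$.

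Doing the same computation in the reversed order $\widetilde{T'}_{\lambda'} \widetilde{T}_\lambda x$, the three cases where $(i,j) \neq (0,0)$ produce exactly the same values $x_{i+1,j+1}$, $T'_{\lambda'}x_{i+1,0}$, and $T_\lambda x_{0,j+1}$ respectively, essentially because horizontal and vertical index shifts obviously commute and because filling only occurs in the $0$-row (resp.\ $0$-column), which the other operator merely shifts into from an already-filled neighbour. At $(0,0)$ we obtain $T'_{\lambda'} T_\lambda x_{0,0}$, and here the hypothesis $T_\lambda T'_{\lambda'} = T'_{\lambda'} T_\lambda$ from the definition of commuting Markov systems closes the argument.

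There is no genuine obstacle here; the only subtlety worth flagging carefully is the two boundary cases $(i<0, j=0)$ and $(i=0, j<0)$, where one has to notice that the shift brings an already-acted-upon entry into the $0$-row/column so that no second application of $T_\lambda$ or $T'_{\lambda'}$ occurs. Provided one keeps track of when each operator fills versus shifts, the verification is a finite case check, and the commuting hypothesis is used in precisely one spot, namely the corner $(0,0)$.
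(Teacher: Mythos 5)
Your proof is correct and takes essentially the same approach as the paper: a componentwise case-check in which the commutativity hypothesis on $T_\lambda, T'_{\lambda'}$ is invoked only at $(0,0)$, while the other three cases are pure index bookkeeping (you simply spell out what the paper dismisses as ``easy to check''). One tiny imprecision: in the case $i<0$, $j=0$ you say the horizontal shift lands on column $i+1<0$, but when $i=-1$ one has $i+1=0$; fortunately $y_{i+1,0}=T'_{\lambda'}x_{i+1,0}$ holds for all $i+1\leq 0$, so the computed value is unaffected.
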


\begin{proof} We need to consider a few cases by checking the components. First $\widetilde{T}_{\lambda}\widetilde{T'}_{\lambda'}x$ has $(0,0)$ component $T_{\lambda}T'_{\lambda'}x_{0,0}$ and $\widetilde{T'}_{\lambda'}\widetilde{T}_{\lambda}x$ has $(0,0)$ component $T'_{\lambda'}T_{\lambda}x_{0,0}$. By commutativity of the two original Markov systems $(M,T,p)$ and $(M,T',p')$, these two $(0,0)$ components coincide. The other cases are easy to check and do not require this commutativity assumption. \end{proof}

We will also need the following Lemma, which is easy to establish.

\begin{lemma} \label{lemma: S_2 and P_1 commute} For $\lambda \in \Lambda$ and $\lambda' \in \Lambda'$ we have that $\widetilde{T_{\lambda}}$ and $\widetilde{S'_{\lambda'}}$ commute.
\end{lemma}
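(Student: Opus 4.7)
I will verify the stated commutation by a direct coordinate calculation on $\widetilde{M} = M^{\bZ^2_{\leq 0}}$. Reading the construction, the map $\widetilde{S'_{\lambda'}}$ is manifestly the pure vertical right-shift $\widetilde{S'}$ defined just before the lemma---the subscript $\lambda'$ plays no role in the formula $(\widetilde{S'} x)_{i,j} = x_{i,j-1}$, so the content of the lemma is that $\widetilde{T_\lambda}$ commutes with $\widetilde{S'}$ for every fixed $\lambda \in \Lambda$, uniformly in the dummy index $\lambda' \in \Lambda'$.

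I would fix $x \in \widetilde{M}$ and compute $(\widetilde{T_\lambda}\widetilde{S'_{\lambda'}} x)_{i,j}$ and $(\widetilde{S'_{\lambda'}}\widetilde{T_\lambda} x)_{i,j}$ from the definitions, splitting on $i<0$ versus $i=0$. When $i<0$, both operations reduce to pure coordinate shifts and both compositions yield $x_{i+1,j-1}$. When $i=0$, both sides should reduce to $T_\lambda(x_{0,j-1})$: on one side we first shift column $0$ downwards in the $j$-coordinate and then apply $T_\lambda$ entry-by-entry at row $j$, reading $x_{0,j-1}$; on the other we first apply $T_\lambda$ entry-by-entry to column $0$ and then shift, again ending up reading the result at row $j-1$. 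Both produce $T_\lambda(x_{0,j-1})$, regardless of the order of composition.

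The only bookkeeping is near the boundary rows and columns, but there is no interference: the vertical shift $\widetilde{S'_{\lambda'}}$ never writes fresh data (it only reads the entry at row $j-1 \in \bZ_{<0}$, which is always available in $\widetilde{M}$), while $\widetilde{T_\lambda}$ only modifies the column $i=0$. Thus the two boundary operations act on disjoint pieces of the grid $\bZ^2_{\leq 0}$ and commute trivially, with no appeal to commutativity of the two underlying Markov systems $(M,T,p)$ and $(M,T',p')$. This is exactly the analog of the ``easy cases'' cited at the end of the proof of the preceding lemma, and it is really the only thing to verify---hence the author's assertion that the lemma ``is easy to establish.'' The main (and essentially only) obstacle is simply keeping the index bookkeeping straight at the corner $(0,0)$ where both boundary rules formally apply but do not actually compete.
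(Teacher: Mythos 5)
Your coordinate check is correct: both compositions give $x_{i+1,j-1}$ for $i<0$ and $T_\lambda(x_{0,j-1})$ for $i=0$, and indeed no commutativity of the underlying systems is needed since $\widetilde{S'}$ is a pure shift. The paper offers no proof (it calls the lemma ``easy to establish''), and your verification is precisely the intended one, matching the ``other cases'' remark in the proof of the preceding lemma.
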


\section{Stationary Measures}

Given a $\Lambda$-Markov system $(M,T,p)$, we say that a probability measure $\mu$ on $M$ is stationary if $P^* \mu = \mu$ where $P$ denotes the assocated Markov operator. In other words $$\int Pf d\mu = \int f d\mu \quad \text{for all } f \in C(M).$$ We say that a Markov system has \textit{constant transition probabilities} if $p_{\lambda}(x_1) = p_{\lambda}(x_2)$ for all $x_1,x_2 \in X$ and $\lambda \in \Lambda$. For simplicity of language, in this case we will treat $p_{\lambda}$ as element of $[0,1]$ rather than a function on $M$.

\begin{lemma} Suppose that $(M,T,p)$ and $(M,T',p')$ are two $\Lambda$-Markov systems that commute and that have constant transition probabilities. Then their associated Markov operators commute. 

\end{lemma}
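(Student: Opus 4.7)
The plan is a direct calculation: expand both compositions $PP'f$ and $P'Pf$ via the definitions of the Markov operators, then use the two hypotheses (constant transition probabilities and commutativity of the $T$'s) to match terms.

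First I would write, for any $f \in C(M)$ and $x \in M$,
\begin{align*}
(PP'f)(x) &= \sum_{\lambda \in \Lambda} p_{\lambda} (P'f)(T_{\lambda}x) = \sum_{\lambda \in \Lambda}\sum_{\lambda' \in \Lambda'} p_{\lambda}\, p'_{\lambda'}\, f(T'_{\lambda'} T_{\lambda} x), \\
(P'Pf)(x) &= \sum_{\lambda' \in \Lambda'} p'_{\lambda'} (Pf)(T'_{\lambda'} x) = \sum_{\lambda' \in \Lambda'}\sum_{\lambda \in \Lambda} p'_{\lambda'}\, p_{\lambda}\, f(T_{\lambda} T'_{\lambda'} x).
\end{align*}
This is where constancy of the transition probabilities is doing the work: if $p_\lambda$ or $p'_{\lambda'}$ depended on the base point, the first expansion would actually have $p'_{\lambda'}(T_\lambda x)$ and the second $p_\lambda(T'_{\lambda'} x)$, which we could not readily identify.

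Next I would apply the commutativity hypothesis $T_{\lambda} T'_{\lambda'} = T'_{\lambda'} T_{\lambda}$ to replace $f(T_{\lambda} T'_{\lambda'} x)$ by $f(T'_{\lambda'} T_{\lambda} x)$ term-by-term in the second double sum. Since the summands $p_\lambda p'_{\lambda'} f(T'_{\lambda'} T_\lambda x)$ are then identical in both expressions and the double sums are finite (so order of summation is irrelevant), we conclude $(PP'f)(x) = (P'Pf)(x)$ for every $x \in M$ and $f \in C(M)$, hence $PP' = P'P$.

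There is no real obstacle here; the lemma is essentially tautological once the definitions are unfolded. The only thing to flag in the write-up is the role of the constant-transition-probability assumption, which is what allows the scalar factors $p_\lambda$ and $p'_{\lambda'}$ to commute freely past one another and past the evaluations of $f$.
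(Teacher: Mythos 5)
Your proposal is correct and follows essentially the same route as the paper: expand the composed operators, use constancy of the transition probabilities to treat $p_\lambda, p'_{\lambda'}$ as scalars independent of the base point, and invoke $T_\lambda T'_{\lambda'} = T'_{\lambda'} T_\lambda$ to identify the two double sums. The only cosmetic difference is that you write out both expansions explicitly, whereas the paper expands one side and appeals to the symmetry of the resulting expression.
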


\begin{proof} For $f \in C(M)$ and $x \in M$ we have that \begin{align*} P'Pf(x) &= P'(Pf)(x)\\ &= \sum_{\lambda' \in \Lambda} p'_{\lambda'}(x) (Pf)(T'_{\lambda'}x)\\ &=  \sum_{\lambda' \in \Lambda} p'_{\lambda'}(x) \sum_{\lambda \in \Lambda} p_{\lambda}(T'_{\lambda'}(x)) f(T_{\lambda}T'_{\lambda'}x)\\ &= \sum_{\lambda, \lambda' \in \Lambda} p'_{\lambda'}(x) p_{\lambda}(T'_{\lambda'}(x)) f(T_{\lambda}T'_{\lambda'}x)    \end{align*}
But by commutativity and the assumption that $p_{\lambda}$ and $p'_{\lambda'}$ are constant on $M$, we see that this expression is symmetric in $T$ and $T'$, hence the desired result. \end{proof}

\begin{lemma} Two Markov systems on a compact metric space $M$ whose Markov operators $P_1$ and $P_2$ commute have a common stationary measure. \end{lemma}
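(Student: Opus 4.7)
The plan is to invoke the Markov--Kakutani fixed point theorem applied to the dual operators on the space of Borel probability measures on $M$. Equip $\cP(M)$, the set of Borel probability measures on $M$, with the weak-$*$ topology; by the Banach--Alaoglu theorem together with compactness of $M$, it is a non-empty compact convex subset of $C(M)^*$. The dual operators $P_1^*, P_2^*$ act on $\cP(M)$, and each one maps $\cP(M)$ into itself because $P_i$ sends constants to constants and non-negative functions to non-negative functions (so $P_i^*\mu$ is again a probability measure). Both $P_1^*$ and $P_2^*$ are affine and weak-$*$ continuous, since for any $f \in C(M)$ the functional $\mu \mapsto \int P_i f\, d\mu$ is weak-$*$ continuous.

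Next I would use that commutativity of $P_1$ and $P_2$ on $C(M)$ immediately yields commutativity of $P_1^*$ and $P_2^*$ on $\cP(M)$. Applying the Markov--Kakutani fixed point theorem to the commuting family $\{P_1^*, P_2^*\}$ of continuous affine self-maps of the non-empty compact convex set $\cP(M)$ produces a common fixed point $\mu \in \cP(M)$, which is by definition a common stationary measure.

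If one prefers to avoid quoting Markov--Kakutani, the same conclusion can be reached in two elementary steps. First, construct a $P_1$-stationary measure $\mu_0$ by the standard Krylov--Bogolyubov procedure: pick any $\nu \in \cP(M)$, form the Cesàro averages $\frac{1}{N}\sum_{n=0}^{N-1}(P_1^*)^n \nu$, and extract a weak-$*$ limit. Let $K \subset \cP(M)$ be the set of $P_1$-stationary probability measures; it is non-empty, convex, and weak-$*$ compact. The commutativity $P_1^*P_2^* = P_2^*P_1^*$ then shows that $P_2^*$ preserves $K$, since $P_1^*(P_2^*\mu) = P_2^*(P_1^*\mu) = P_2^*\mu$ whenever $\mu \in K$. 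A second Krylov--Bogolyubov averaging, now of $P_2^*$ on $K$, produces a common fixed point $\mu \in K$, which is stationary for both Markov systems.

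The only conceptual point is checking that $P_i^*$ really maps probability measures to probability measures and is weak-$*$ continuous; everything else is routine. There is no serious obstacle, since compactness of $M$ makes $\cP(M)$ compact, and the hypothesis that the Markov operators commute is used in exactly one place, to ensure that $P_2^*$ preserves the set of $P_1$-stationary measures (equivalently, to apply Markov--Kakutani to a commuting pair).
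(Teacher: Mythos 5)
Your proposal is correct, and both of your routes differ slightly in organization from the paper's argument, which forms the single double Ces\`aro average $\mu_N = \frac{1}{N^2}\sum_{i,j=0}^{N-1}(P_1^*)^i(P_2^*)^j\mu$ and takes one weak-$*$ limit; there commutativity is used to slide $P_2^*$ past the powers of $P_1^*$ so that both differences telescope and are $O(1/N)\|f\|_\infty$. Your second variant (Krylov--Bogolyubov for $P_1^*$, then a second averaging of $P_2^*$ inside the compact convex set $K$ of $P_1$-stationary measures, which $P_2^*$ preserves by commutativity) is essentially a sequentialized version of the same averaging idea and uses the commutation hypothesis in the same single place; it is marginally longer to write but makes the role of commutativity very transparent. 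Your first variant, quoting Markov--Kakutani for the commuting affine weak-$*$ continuous self-maps $P_1^*,P_2^*$ of $\mathcal{P}(M)$, is the most black-box route: it buys brevity and generality (it would handle any commuting family at once) at the cost of invoking an external fixed point theorem, whereas the paper's double average is self-contained and quantitative in the trivial $O(1/N)$ sense. The checks you flag --- that $P_i$ maps $C(M)$ to $C(M)$ (continuity of $T_\lambda$, $p_\lambda$), that $P_i 1 = 1$ and $P_i$ preserves positivity so $P_i^*$ maps probability measures to probability measures, and weak-$*$ continuity and affinity of $P_i^*$ --- are exactly the right ones and all hold, so there is no gap.
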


\begin{proof} Let $\mu$ be any measure on $M$ and define $$\mu_N = \frac{1}{N^2}\sum_{i,j=0}^{N-1} (P_{1}^*)^{i} (P_{2}^*)^{j} \mu.$$ By compactness this has a weak-$^*$ limit and one readily sees that it is invariant under both Markov operators as $$\left|\int f d\mu_N - \int Pf d\mu_N \right| = O(1/N)\|f\|_{\infty}$$ for all $f \in C(M)$ and $P=P_1,P_2$. \end{proof}

\begin{prop} \label{prop: common stationary measure} Let $\widetilde{M}$ be the common endomorphic extension of two commuting $\Lambda$-Markov systems $(M,T_1,p_1)$ and $(M, T_2,p_2)$ with constant transition probabilities and let $\mu$ be a measure for $M$ that is stationary for both systems. Then there exists a measure $\widetilde{\mu}$ on $\widetilde{M}$ that is stationary for $(\widetilde{M}, \widetilde{T_1}, \widetilde{p_1})$ and $(\widetilde{M}, \widetilde{T_2}, \widetilde{p_2})$ and such that $\pi^* \widetilde{\mu} = \mu$ where $\pi:\widetilde{M} \to M$ is the factor map $\pi(x) = x_{0,0}$ for $x \in \widetilde{M} = M^{\bZ^2_{\leq 0}}$. \end{prop}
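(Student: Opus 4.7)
The plan is to run the Cesàro averaging construction from the preceding lemma, but starting from a carefully chosen initial measure $\widetilde{\mu}_0$ on $\widetilde{M}$ whose pushforward under $\pi$ is already $\mu$, and then to verify that this pushforward property survives both the averaging and the weak-$^*$ limit. As a setup, note that $\widetilde{M} = M^{\bZ^2_{\leq 0}}$ is compact metric as a countable product, that each $\widetilde{T}_{k,\lambda}$ is continuous in the product topology, and that $\widetilde{p}_{k,\lambda}$ inherits constancy from $p_{k,\lambda}$; thus $(\widetilde{M}, \widetilde{T_k}, \widetilde{p_k})$ is a $\Lambda$-Markov system with constant transition probabilities for $k=1,2$. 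The commutation lemma above gives that $\widetilde{T}_{1,\lambda}$ and $\widetilde{T}_{2,\lambda'}$ commute, and the preceding lemma on constant transition probabilities then yields that the associated Markov operators $\widetilde{P}_1$ and $\widetilde{P}_2$ commute.

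Let $\Delta : M \to \widetilde{M}$ be the diagonal embedding $\Delta(x) = (x)_{(i,j) \in \bZ^2_{\leq 0}}$, set $\widetilde{\mu}_0 := \Delta_* \mu$, and observe that $\pi^* \widetilde{\mu}_0 = \mu$ tautologically. Form the Cesàro averages
$$\widetilde{\mu}_N := \frac{1}{N^2} \sum_{i,j=0}^{N-1} (\widetilde{P}_1^*)^i (\widetilde{P}_2^*)^j \widetilde{\mu}_0$$
and extract a weak-$^*$ subsequential limit $\widetilde{\mu}$ by compactness of the space of probability measures on $\widetilde{M}$. Exactly as in the preceding lemma, the telescoping estimate
$$\left|\int \widetilde{P}_k f \, d\widetilde{\mu}_N - \int f \, d\widetilde{\mu}_N\right| = O(1/N)\|f\|_{\infty}$$
for $k \in \{1,2\}$ and $f \in C(\widetilde{M})$, combined with commutativity of $\widetilde{P}_1$ and $\widetilde{P}_2$, shows that $\widetilde{\mu}$ is stationary for both $\widetilde{P}_1^*$ and $\widetilde{P}_2^*$.

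The final and central point is to verify $\pi^* \widetilde{\mu} = \mu$, which rests on the intertwining identity
$$\widetilde{P}_k (f \circ \pi)(x) = \sum_{\lambda \in \Lambda} p_{k,\lambda}\, f(T_{k,\lambda} x_{0,0}) = (P_k f)(\pi(x)),$$
valid for $f \in C(M)$ and $k \in \{1,2\}$, so $\widetilde{P}_k(f \circ \pi) = (P_k f) \circ \pi$. Iterating gives $\widetilde{P}_1^i \widetilde{P}_2^j(f \circ \pi) = (P_1^i P_2^j f) \circ \pi$, hence
$$\int f \, d(\pi^* \widetilde{\mu}_N) = \frac{1}{N^2} \sum_{i,j=0}^{N-1} \int P_1^i P_2^j f \, d(\pi^* \widetilde{\mu}_0) = \int f \, d\mu,$$
using $\pi^* \widetilde{\mu}_0 = \mu$ and $P_k^* \mu = \mu$. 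Since $\pi$ is continuous, $\pi^*$ is weak-$^*$ continuous, and passing to the limit gives $\pi^* \widetilde{\mu} = \mu$. There is no real obstacle beyond spotting the intertwining identity and picking $\widetilde{\mu}_0$ with the correct marginal at the outset; thereafter the argument is a routine adaptation of the preceding lemma.
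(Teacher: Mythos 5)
Your proof is correct and follows the same route as the paper: start from an initial measure $\widetilde{\mu}_0$ on $\widetilde{M}$ with $\pi^*\widetilde{\mu}_0 = \mu$, form the two-dimensional Cesàro averages, use the intertwining $\widetilde{P}_k(f\circ\pi) = (P_kf)\circ\pi$ (equivalently $P_k^*\pi^* = \pi^*\widetilde{P}_k^*$, which is the form the paper uses) to show each $\widetilde{\mu}_N$ still pushes forward to $\mu$, and take a weak-$^*$ limit. The only cosmetic difference is your choice of the diagonal pushforward $\Delta_*\mu$ where the paper suggests the product measure; the paper explicitly allows any lift of $\mu$, so this is immaterial.
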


\begin{proof} Let $\widetilde{\mu}_0$ be any measure (not necessarily stationary) on $\widetilde{M}$ such that $\pi^* \widetilde{\mu}_0 = \mu$ (for example, one can take $\widetilde{\mu}_0$ to be the product measure $\prod_{i,j=0}^{\infty} \mu$). Now let $$\widetilde{\mu}_N = \frac{1}{N^2}\sum_{i,j=0}^{N-1} (\widetilde{P}_{1}^*)^{i} (\widetilde{P}_{2}^*)^{j} \widetilde{\mu}_0.$$ Observe that for $i=1,2$ we have that $$\pi \circ (\widetilde{T_i})_{\lambda} = (T_i)_{\lambda} \circ \pi$$ and that $(p_i)_{\lambda}$ and $(\widetilde{p}_i)_{\lambda}$ are equal (more precisely, they are constant functions, on $M$ and $\widetilde{M}$ respectively, with the same value) and so we must have that $P_i^* \pi^* = \pi^* \widetilde{P_i}^*$. This relation now implies that $\pi^* \widetilde{\mu}_N = \mu$. Thus if we take a weak$^*$-limit of $\widetilde{\mu}_N$ then it is a stationary measure that also projects to $\mu$ on $M$. \end{proof}

We now state some basic facts about stationary measures on endomorphic systems already established in \cite{FWTrees}. We give proofs for completeness.

\begin{prop}\label{prop: P S basics} Let $\mu$ be a stationary measure on a $\Lambda$-Markov system $(M,T,p)$ that is endomorphic with respect to a continuous $S:M \to M$. Let $P:C(M) \to C(M)$ be the corresponding Markov operator on $M$. Then

\begin{enumerate}
	\item The operator $P$ extends uniquely to $P:L^q(M,\mu) \to L^q(M,\mu)$ for $1\leq q \leq \infty$, which has operator norm $1$ and is defined by the same formula $$Pf(x) = \sum_{\lambda \in \Lambda} p_{\lambda}(x)f(Tx).$$
	\item For any $h,g \in L^{\infty}(M,\mu)$ we have (where $Sg$ is the function $Sg(x) :=g(Sx)$) $$P(h\cdot Sg) =  (Ph) \cdot g.$$
	\item $P$ is a left inverse of $S$. That is, $PS=\operatorname{I}$.
	\item The map $S$ preserves $\mu$.
	\item $P$ and $S$ are adjoint operators on $L^2(M,\mu)$.
\end{enumerate}

\end{prop}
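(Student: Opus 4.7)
The plan is to do (1) by a Jensen-type inequality and then to derive (2)--(5) in sequence from a single key identity, namely the endomorphic intertwining $P(h \cdot Sg) = (Ph)\cdot g$.

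For (1) I would first note that since $p_\lambda(x)\geq 0$ and $\sum_\lambda p_\lambda(x)=1$, the operator $P$ averages values of $f$ pointwise against a probability distribution on $\Lambda$. Thus by Jensen's inequality $|Pf(x)|^q \leq P(|f|^q)(x)$ for $1\leq q<\infty$ (the $q=\infty$ case being trivial from $\|Pf\|_\infty \leq \|f\|_\infty$). Integrating and using stationarity $\int P(|f|^q)\,d\mu = \int |f|^q\,d\mu$ gives $\|Pf\|_q \leq \|f\|_q$ for $f \in C(M)$, so $P$ extends by density from $C(M)$ to a contraction on $L^q(M,\mu)$; to see that the same pointwise formula still defines it, pick a sequence $f_n \to f$ in $L^q$ with $f_n \in C(M)$ and observe that both sides converge $\mu$-a.e.\ along a subsequence. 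The norm is exactly $1$ because $P1 = 1$.

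For (2), the key observation is that $P(h\cdot Sg)(x) = \sum_{\lambda} p_\lambda(x)\, h(T_\lambda x)\, g(S(T_\lambda x))$, and on the set where $p_\lambda(x)>0$ the endomorphic assumption gives $S(T_\lambda x) = x$, so $g(S(T_\lambda x)) = g(x)$ in every nonzero summand. Factoring $g(x)$ out of the sum yields $P(h\cdot Sg) = (Ph)\cdot g$. This is the main computation and, given the endomorphic hypothesis, is essentially immediate; I expect no real obstacle here.

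Parts (3)--(5) then fall out formally. Taking $h\equiv 1$ in (2) gives $PSg = (P1)\cdot g = g$, proving (3). For (4), combine (3) with stationarity: for $f \in C(M)$, $\int f\,d\mu = \int PSf\,d\mu = \int Sf\,d\mu$, so $S_*\mu = \mu$. For (5), for $f,g \in L^2(M,\mu)$ apply (2) with $h = f$ and integrate, using both stationarity of $\mu$ under $P$ and $S$-invariance from (4):
\begin{equation*}
\langle Pf, g\rangle = \int (Pf)\cdot g\, d\mu = \int P(f\cdot Sg)\, d\mu = \int f\cdot Sg\, d\mu = \langle f, Sg\rangle,
\end{equation*}
so $P^* = S$ on $L^2$. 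The only mild care needed is to extend the identity $P(h\cdot Sg) = (Ph)\cdot g$ from $C(M)$ to $L^\infty$, which follows by a standard approximation argument using the $L^q$-continuity of $P$ established in (1) together with $S$-invariance of $\mu$.
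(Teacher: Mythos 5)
Your proposal is correct and follows essentially the same route as the paper: the pointwise Jensen-type bound $|Pf|^q \leq P(|f|^q)$ combined with stationarity for (1), the pointwise computation using $S(T_\lambda x)=x$ on $\{p_\lambda>0\}$ for (2), and then (3)--(5) derived formally by taking $h\equiv 1$, integrating, and integrating the identity in (2). The only cosmetic difference is that you phrase the adjointness chain in the reverse order and add a (harmless, standard) remark about extending from $C(M)$ to $L^\infty$; nothing substantive differs.
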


\begin{proof} Let $f:M \to \bC$ be measurable. Then for $q\geq 1$ we have,
\begin{align*} |Pf(x)| &= \left| \sum_{\lambda \in \Lambda} p_{\lambda}(x) f(T_{\lambda}x) \right| 
\\ &\leq \sum_{\lambda \in \Lambda} p_{\lambda}(x) |f(T_{\lambda}x)| 
\\ &\leq  \left( \sum_{\lambda \in \Lambda} p_{\lambda}(x) |f(T_{\lambda}x)|^q \right)^{1/q}
\\ &= \left(P(|f|^q)(x)\right)^{1/q} \end{align*}

Actually, the first inequality shows that $\| Pf \|_{\infty} \leq \|f \|_{\infty}$ and so $P$ has operator norm $1$ on $L^{\infty}$. Now for $1\leq q<\infty$, if we integrate the estimate $|Pf|^q \leq P(|f|^q)$ just established we get $$\int |Pf|^q d\mu \leq \int P(|f|^q) d\mu = \int |f|^q d\mu.$$ This shows that $P$ is an operator on $L^{q}(M,\mu)$ with operator norm $1$. Now if $h,g \in L^{\infty}(M,\mu)$ then 
\begin{align*} P(h \cdot Sg)(x) &= \sum_{\lambda \in \Lambda} p_{\lambda}(x) (h \cdot Sg) (T_{\lambda}x)
\\ &= \sum_{\lambda \in \Lambda} p_{\lambda}(x) h(T_{\lambda}x) g(ST_{\lambda}x)
\\ &= \sum_{\lambda \in \Lambda} p_{\lambda}(x) h(T_{\lambda}x) g(x)
\\ &= Ph(x) \cdot g(x). \end{align*}

In particular, if we take $h=1$ and use $Ph=1$, then we get the identity $P(Sg) = g$, so $PS=I$. Integrating this we get $$\int g d\mu = \int P(Sg) d\mu = \int Sg d\mu,$$ so $S$ preserves $\mu$. Finally, $$\langle h, Sg \rangle = \int h \cdot Sg d\mu = \int P(h \cdot Sg) d\mu = \int Ph \cdot g d\mu = \langle Ph, g \rangle$$ thus $P$ and $S$ are indeed adjoint on $L^2(M,\mu)$.\end{proof}

\section{Trees in Markov Systems}

Given a $\Lambda$-Markov system $(M, T, p)$ a \textit{path from $x$ to $x'$ in $(M,T,p)$} is a sequence of points $x_0, \ldots, x_n$ such that $x_0 = x$, $x_n = x'$ and for each $i \in \{0, \ldots, n-1\}$ there exists $\lambda_i \in \Lambda$ such that $x_{i+1} = T_{\lambda_i} x_i$ and $p_{\lambda_i}(x_i) > 0$. Given such a path, we say that its \textit{length} is $n$ and we say that it has \textit{initial direction} $\lambda_0$ if $n>0$. 

\begin{mydef} \label{def: tree in Markov system} Suppose that $\mathcal{M}_1 = (M,T_1,p_1)$ and $\mathcal{M}_2 = (M,T_2,p_2)$ are commuting $\Lambda$-Markov systems and let $A \subset M$. Given vectors $u = (u_1,u_2),v = (v_1,v_2) \in \bZ_{\geq 0}^2$ we define an $(u,v)$-arithmetic product tree of order $r$ in $M$ to be a map $\phi:B_r(\widetilde{\Gamma}) \to A$ such that for $\gamma \in B_{r-1}(\widetilde{\Gamma})$ and $\lambda \in \Lambda$ we have that:

\begin{enumerate}
	\item  There is a path in $(M,T_1,p_1)$ of length $u_1$ from $\phi(\gamma)$ to some point $x \in M$ with initial direction $\lambda$ followed by a path of length $u_2$ in $(M,T_2,p_2)$ from this point $x$ to $\phi(\gamma X_{\lambda})$.
	\item There is a path in $(M,T_2,p_2)$ of length $v_2$ from $\phi(\gamma)$ to some point $x \in M$ with initial direction $\lambda$ followed by a path of length $v_1$ in $(M,T_1,p_1)$ from this point $x$ to $\phi(\gamma Y_{\lambda})$
\end{enumerate}

We say that $\phi(\emptyset)$ is the \textit{root} of this $(u,v)$-arithmetic product tree.

\end{mydef}

The main result concerns the existence of embeddings of arithmetic product trees in such commuting Markov systems.

\begin{mydef} We say that a $\Lambda$-Markov system $(M,T,p)$ is non-degenerate if for all $\lambda \in \Lambda$, the function $p_{\lambda}$ is not $0$ on all of $M$. Moreover, we say that it has disjoint images if $T_{\lambda}(M) \cap T_{\lambda'}(M) = \emptyset$ for distinct $\lambda, \lambda' \in \Lambda$.\end{mydef}

\begin{thm}\label{thm: arithmetic product trees in Markov} Let  $\mathcal{M}_1 = (M,T_1,p_1)$ and $\mathcal{M}_2 = (M,T_2,p_2)$ be non-degenerate commuting $\Lambda$-Markov systems with constant transition probabilities and disjoint images. Suppose that $\mu$ is a common stationary measure for both of these systems and $A \subset M$ a measurable set with $\mu(A)>0$. Then for each positive integer $r>0$ and $u,v \in \bZ^2_{>0}$ there exists a positive integer $n$ such that there exists a $(nu,nv)$-arithmetic product tree of order $r$ in $A$. Moreover, there is a positive measure subset $A' \subset A$ such that all $a \in A'$ are roots of a $(nu,nv)$-arithmetic product tree of order $r$ in $A$. \end{thm}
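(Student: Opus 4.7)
The plan is to mirror the Furstenberg--Weiss strategy \cite{FWTrees} for a single tree, adapted to two commuting systems via the common endomorphic extension of Proposition~\ref{prop: common stationary measure}. Since the $p_{i,\lambda}$ are constants, the factor map $\pi\colon\widetilde{M}\to M$ intertwines the transformations $T_{i,\lambda}$ and the Markov operators, so any $(nu,nv)$-arithmetic product tree in $\pi^{-1}(A)$ for the extended systems descends to one in $A$. Hence I would assume $(M,T_i,p_i)$ for $i=1,2$ are already endomorphic with respect to commuting $\mu$-preserving shifts $S_1,S_2$ with $P_iS_i=I$ and $P_i^{*}=S_i$ on $L^{2}(\mu)$, invoking Proposition~\ref{prop: P S basics} and Lemma~\ref{lemma: S_2 and P_1 commute}.

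The proof would then proceed by induction on $r$, strengthened so that the set $\mathcal{N}(A,r)\subset\mathbb{Z}_{>0}$ of admissible gaps has positive lower density and $\mu(A')\geq c(\mu(A),r)>0$ uniformly in $n\in\mathcal{N}(A,r)$. The case $r=0$ is trivial. For the inductive step, the hypothesis applied to $A$ supplies, for each $n$ in a density-positive set, a positive-measure $B_n\subset A$ of roots of $(nu,nv)$-trees of order $r-1$ in $A$. An order-$r$ tree with gap $n$ rooted at $a$ exists precisely when all $2|\Lambda|$ gap-$n$ children of $a$ lie in $B_n$; by non-degeneracy of the $p_{i,\lambda}$ and the disjoint-images hypothesis this reduces to producing a positive-density set of $n\in\mathcal{N}(A,r-1)$ with
\[
I_n\;=\;\int\mathbf{1}_{A}\,\prod_{\lambda\in\Lambda}\bigl(P_1^{nu_1-1}P_2^{nu_2}\mathbf{1}_{B_n}\bigr)\!\circ T_{1,\lambda}\;\cdot\;\prod_{\lambda\in\Lambda}\bigl(P_2^{nv_2-1}P_1^{nv_1}\mathbf{1}_{B_n}\bigr)\!\circ T_{2,\lambda}\;d\mu\;>\;0.
\]

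The hard part will be the Cesaro estimate $\liminf_{N\to\infty}\tfrac{1}{N}\sum_{n=1}^{N}I_n>0$. I would exploit the adjointness $P_i^{*}=S_i$, the mutual commutativity of $P_1,P_2,S_1,S_2$, and the identity $P(h\cdot Sg)=(Ph)\cdot g$ from Proposition~\ref{prop: P S basics} to rewrite each $I_n$ as an $L^{2}$ inner product of translates of $\mathbf{1}_{B_n}$ under products of powers of $S_1,S_2$. An iterated van der Corput / $UU^{*}$ argument, applied once per branch (so $2|\Lambda|$ times in total) to peel the branching off one generator at a time, should reduce the desired lower bound to a Furstenberg--Katznelson-style multiple recurrence for the commuting \emph{measure-preserving} transformations $S_1,S_2$, a setting where positive lower density of good $n$ is classical. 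The most delicate aspect is that $B_n$ itself varies with $n$, which is exactly why the uniform lower bound on $\mu(B_n)$ built into the strengthened induction is essential: without such uniformity the van der Corput reductions could in principle collapse to zero in the limit.
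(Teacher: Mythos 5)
Your reduction to the endomorphic case via Proposition~\ref{prop: common stationary measure} is exactly what the paper does, and your characterization of an order-$r$ tree in terms of all $2|\Lambda|$ gap-$n$ children landing in a set of order-$(r-1)$ roots is correct. The gap is in the inductive mechanism you propose after that.

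You set up an induction on $r$ where the inductive hypothesis hands you a family of sets $B_n$ (roots of order-$(r-1)$ trees with gap $n$), and you then want
\[
\liminf_{N\to\infty}\frac{1}{N}\sum_{n=1}^{N}I_n>0,
\qquad
I_n=\int\mathds{1}_{A}\prod_{\lambda}\bigl(P_1^{nu_1-1}P_2^{nu_2}\mathds{1}_{B_n}\bigr)\circ T_{1,\lambda}\cdot\prod_{\lambda}\bigl(P_2^{nv_2-1}P_1^{nv_1}\mathds{1}_{B_n}\bigr)\circ T_{2,\lambda}\,d\mu.
\]
The obstruction is that $\mathds{1}_{B_n}$ depends on $n$. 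The Furstenberg--Katznelson theorem, and the van der Corput / $UU^{*}$ reductions that feed into it, control Cesàro averages of correlation integrals of a \emph{fixed} set against its translates; there is no version for a sequence of targets $B_n$ that merely satisfy a uniform lower bound $\mu(B_n)\geq c$. Indeed, even for a single measure-preserving $T$ one can cook up $B_n$ with $\mu(B_n)$ bounded below but $\mu(A\cap T^{-n}B_n)=0$ for all $n$. The special structure of $B_n$ as a set of roots does not save you here, because your induction treats $B_n$ as a black box, and the uniform lower bound on its measure is the only quantitative information retained. So the van der Corput step you describe does not reduce to a known recurrence statement, and I do not see how to close the induction as written.

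The paper's proof avoids this entirely by never forming the set of roots. It defines functions $\varphi_r$ recursively (where $\varphi_1=\mathds{1}_A\prod_\lambda(P_1\mathds{1}_{B_\lambda})(P_2\mathds{1}_{C_\lambda})$ and $\varphi_r$ is built from $\varphi_{r-1}$ by the same $P_1(\mathds{1}_{B_\lambda}\cdot P_1^{u_1n-1}P_2^{u_2n}(\cdot))$-type operations), so that $\varphi_r(x)>0$ forces $x$ to be a root of an order-$(r-1)$ tree, but $\varphi_r$ is not an indicator. The key trick is then to push everything through the projection $Q_\infty$ onto $H_\infty=\bigcap_{n,m}L^2(S_1^{-n}S_2^{-m}\mathcal{B})$: using $\|Q_{n,m}f-Q_\infty f\|\to0$ and the identity $P(h\cdot Sg)=(Ph)\cdot g$, one shows $\varphi_r$ is $L^2$-close (for large $n$) to a function $\psi'_r$ with $Q_\infty\psi'_r=\varphi_\infty\prod_{a,b}W_1^{an}W_2^{bn}\varphi_\infty$, where $\varphi_\infty=Q_\infty\varphi_1$ is a \emph{fixed} function independent of $n$ and $W_1=P_1^{u_1}P_2^{u_2}$, $W_2=P_1^{v_1}P_2^{v_2}$ are invertible on $H_\infty$. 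At that point Furstenberg--Katznelson is applied once, to a fixed function and commuting measure-preserving transformations, and the $n$-dependence lives only in the exponents. In other words, the recursion is unwound into a single multi-correlation of a fixed function before recurrence is invoked, rather than an induction that re-invokes recurrence with a moving target. If you want to repair your argument, you would need to replace the opaque sets $B_n$ by the structured functions $\varphi_{r-1}$ and perform the $Q_\infty$ collapse before appealing to recurrence -- at which point you essentially recover the paper's proof.
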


We first show that we may reduce this to the case of Endomorphic systems as follows.

\begin{thm}\label{thm: arithmetic product trees in Endomorphic}  Let  $\mathcal{M}_1 = (M,T_1,p_1)$ and $\mathcal{M}_2 = (M,T_2,p_2)$ be commuting $\Lambda$-Markov systems with constant transition probabilities and disjoint images that are endomorphic with respect to continous $S_1:M \to M$ and $S_2:M \to M$ respectively. Suppose also that $(T_2)_{\lambda}$ commutes with $S_1$ for all $\lambda \in \Lambda$. Suppose that $\mu$ is a common stationary measure for both of these systems and $A \subset M$ a measurable set with $\mu(A)>0$. Fix $u,v \in \bZ^2_{>0}$. Then for each positive integer $r>0$ there exists a positive integer $n$ such that there exists a $(nu,nv)$-arithmetic product tree of order $r$ in $M$. Moreover, there is a positive measure subset $A' \subset A$ such that all $a \in A'$ are roots of a $(nu,nv)$-arithmetic product tree of order $r$. \end{thm}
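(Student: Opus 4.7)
The plan is to prove the theorem by induction on the order $r$, maintaining the hypothesis $P(r)$: for every measurable $A \subset M$ with $\mu(A) > 0$ there exist $n \in \bZ_{>0}$ and a positive-measure subset $A' \subset A$ such that every $a \in A'$ is the root of an $(nu,nv)$-arithmetic product tree of order $r$ all of whose vertices lie in $A$. The base case $P(0)$ is immediate: take $A' = A$ and any $n$.

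Before the inductive step I record the operator structure. By the earlier lemma on commuting Markov systems with constant transition probabilities, $P_1 P_2 = P_2 P_1$. Since $P_i S_i = \operatorname{I}$ and $S_i = P_i^*$ in $L^2(\mu)$ (Proposition~\ref{prop: P S basics}), taking adjoints of $P_1 P_2 = P_2 P_1$ gives $S_1 S_2 = S_2 S_1$. The hypothesis $(T_2)_\lambda S_1 = S_1 (T_2)_\lambda$ yields $P_2 S_1 = S_1 P_2$ by unpacking the definition of $P_2$, and the symmetric relation $P_1 S_2 = S_2 P_1$ is automatic in the setting of the common endomorphic extension (cf.\ Lemma~\ref{lemma: S_2 and P_1 commute}). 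So $\{P_1, P_2, S_1, S_2\}$ pairwise commute and $S_1, S_2$ both preserve $\mu$. Writing $L_{i,\lambda}f := p_{i,\lambda}\,f \circ (T_i)_\lambda$, we have $P_i = \sum_\lambda L_{i,\lambda}$ with each $p_{i,\lambda} > 0$ constant by non-degeneracy.

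Unpacking Definition~\ref{def: tree in Markov system}, a point $a \in A$ is the root of an $(nu,nv)$-arithmetic product tree of order $r$ in $A$ if and only if, writing $B_n \subset A$ for the set of roots of order-$(r-1)$ product trees in $A$ with gap $n$, all the $2|\Lambda|$ functions
$$L_{1,\lambda}\,P_1^{nu_1-1}\,P_2^{nu_2}\,\mathbf{1}_{B_n}\quad\text{and}\quad L_{2,\lambda}\,P_2^{nv_2-1}\,P_1^{nv_1}\,\mathbf{1}_{B_n}, \qquad \lambda \in \Lambda,$$
are strictly positive at $a$. The inductive step thus reduces to showing that this joint positivity holds on a positive-measure subset of $A$ for a suitable $n$. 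I would attack this by bounding below the integral
$$I_n := \int_A \prod_{\lambda \in \Lambda}\bigl(L_{1,\lambda}P_1^{nu_1-1}P_2^{nu_2}\mathbf{1}_{B_n}\bigr)\,\prod_{\lambda \in \Lambda}\bigl(L_{2,\lambda}P_2^{nv_2-1}P_1^{nv_1}\mathbf{1}_{B_n}\bigr)\,d\mu,$$
rewriting it via $S_i = P_i^*$ and the commutativity of $\{P_1, P_2, S_1, S_2\}$ as an $L^2$-pairing involving iterates of the commuting measure-preserving maps $S_1, S_2$ applied to $\mathbf{1}_A$ and $\mathbf{1}_{B_n}$. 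A Furstenberg--Katznelson type multiple recurrence for the commuting $\bZ_{\geq 0}^2$-action generated by $(S_1, S_2)$ would then deliver $I_n > 0$ for an appropriate $n$.

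The principal obstacle is the matching of the gap $n$ across inductive levels: the hypothesis $P(r-1)$ produces some $n_0 = n(A, r-1)$ for which $\mu(B_{n_0}) > 0$, but the multiple recurrence argument above requires $\mu(B_n) > 0$ for a value of $n$ that can only be chosen \emph{after} the recurrence. To resolve this, I would strengthen $P(r)$ to assert that the set of admissible gaps $\{n : \mu(B_n) > 0\}$ is rich enough in $\bZ_{>0}$ (for instance, syndetic or IP-rich). Such a strengthening is plausible because the measure-preserving shifts $S_1, S_2$ propagate a positive-measure root set at one gap into positive-measure root sets at related gaps; making it precise requires a careful simultaneous induction on $r$ coupled to the multiple recurrence. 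Once available, one selects an $n$ that is admissible at every level up to $r$ and applies the recurrence to finish, thereby proving $P(r)$ and hence the theorem.
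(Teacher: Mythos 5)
There is a genuine gap, and you have named it yourself: the matching of the gap $n$ across inductive levels. Your scheme is circular in a way that the proposed fix does not resolve. In your inductive step the target set $B_n$ (roots of order-$(r-1)$ trees with gap $n$) depends on the very parameter $n$ that the multiple recurrence is supposed to select, and the recurrence would have to be applied to the pair $\mathds{1}_A$, $\mathds{1}_{B_n}$ rather than to a single set; Furstenberg--Katznelson gives no syndeticity or IP-richness of return times in this setting, and the strengthened hypothesis $P(r)$ you invoke is precisely the hard point, left unproven. There is also a technical obstruction earlier in your plan: the integral $I_n$ is a product of several Markov-operator images $P_1^{a}P_2^{b}\mathds{1}_{B_n}$, and adjointness $S_i = P_i^*$ only transfers a single factor across the pairing; a product of $P$-iterates is not the $P$-iterate of a product, so $I_n$ cannot be rewritten as a multiple-recurrence expression in the measure-preserving maps $S_1,S_2$ by duality alone.

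The paper's proof avoids both problems with one structural idea that is absent from your proposal: the identity $S_1^{n}P_1^{n}S_2^{m}P_2^{m}=Q_{n,m}$ (orthogonal projection onto $H_{n,m}=L^2(M,S_1^{-n}S_2^{-m}\mathcal{B},\mu)$), whence $\|P_1^{an}P_2^{bn}f-S_1^{-an}S_2^{-bn}Q_{\infty}f\|_2\to 0$. Instead of inducting on $r$ with a fresh $n$ at each level, one fixes a single parameter $n$, defines the recursive functions $\varphi_r$ (whose positivity at $x$ certifies that $x$ is a root of an order-$(r-1)$ tree with gap $n$), and shows that as $n\to\infty$ each $\varphi_r$ is $L^2$-close to $\varphi_1\prod_{a,b}W_1^{an}W_2^{bn}\varphi_{\infty}$, where $W_1=P_1^{u_1}P_2^{u_2}$, $W_2=P_1^{v_1}P_2^{v_2}$ act on $H_{\infty}$ as inverses of commuting Koopman operators and $\varphi_{\infty}=Q_{\infty}\varphi_1$. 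Applying $Q_{\infty}$ and then the Furstenberg--Katznelson theorem to the finitely many commuting transformations $W_1^{a}W_2^{b}$ and the single function $\varphi_{\infty}$ gives $\int Q_{\infty}\psi_r'\,d\mu$ bounded away from zero along arbitrarily large $n$, while the approximation errors vanish, so one large $n$ works simultaneously at every level of the recursion. Without this reduction to the sub-$\sigma$-algebra $H_{\infty}$ (or some substitute for it), your outline does not close.
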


\begin{proof}[Proof of Theorem~\ref{thm: arithmetic product trees in Markov} from Theorem~\ref{thm: arithmetic product trees in Endomorphic}] We let $\widetilde{M}$ be the common endomorphic extension of these systems and let $\pi:\widetilde{M} \to M$ be the natural map as in Proposition~\ref{prop: common stationary measure}. By Proposition~\ref{prop: common stationary measure}, we may find a measure $\widetilde{\mu}$ that is stationary for both of the systems on $\widetilde{M}$ such that $\pi^* \widetilde{\mu} = \mu$. Let $\widetilde{A} = \pi^{-1}(A)$, thus $\widetilde{\mu}(\widetilde{A}) = \mu(A) >0.$ By Lemma~\ref{lemma: S_2 and P_1 commute}, we may apply Theorem~\ref{thm: arithmetic product trees in Endomorphic} to find an $n$ such that an $(nu,nv)$-arithmetic product tree $\phi:B_r(\widetilde{\Gamma}) \to \widetilde{A}$ exists. Finally $\pi \circ \phi$ is an arithmetic product tree in $A$, which can be justified by the identities $\pi ((\widetilde{T}_i)_{\lambda}x) = (T_i)_{\lambda} (\pi(x))$ and $p_{\lambda}(\pi(x)) = \widetilde{p}_{\lambda}(x)$, for $x \in \widetilde{M}$, $\lambda \in \Lambda$ and $i =1,2$, that show that $\pi$ preserves paths of any given length and initial direction. Finally, the set $A' \subset A$ of roots of such trees is measurable and so if $\mu(A') = 0$ then $\widetilde{\mu}(\pi^{-1}A') = 0$, which would contradict the positivity of the measure of the set of roots of such trees in $\widetilde{A}$, thus $\mu(A')>0$. \end{proof}

\section{Recurrence in Endomorphic Systems}

We prove in this section Theorem~\ref{thm: arithmetic product trees in Endomorphic}. So fix two non-degenerate $\Lambda$-Markov systems $(M,T_1,p_1)$ and $(M,T_2,p_2)$ that commute and have constant transition probabilities and are Endomorphic with respect to two commuting continuous maps $S_1:M \to M$ and $S_2:M \to M$ respectively. Thus their corresponding Markov Operators $P_1$ and $P_2$ commute. Recall that we also assume that $S_2$ and $(T_1)_{\lambda}$ commute for all $\lambda \in \Lambda$ (cf. Lemma~\ref{lemma: S_2 and P_1 commute}) and thus $S_2$ and $P_1$ commute. Let $\mu$ be a measure on $M$ that is stationary for both processes. Thus $S_1$ and $S_2$ preserve $\mu$ (by Proposition~\ref{prop: P S basics}). We now let $S_1,S_2$ also denote the corresponding Koopman operators on $L^2(M,\mathcal{B}, \mu)$. Note that (by Proposition~\ref{prop: P S basics}) $P_1,P_2$ extend to norm $1$ operators on $L^2(M,\mathcal{B}, \mu)$ and $L^{\infty}(M,\mu)$ and that $P_i$ is a left inverse of $S_i$, that is $$P_iS_i = \operatorname{I} \quad\text{for } i =1,2.$$ Now let $$H_{n,m} = L^2(M, S_1^{-n}S_2^{-m}\mathcal{B}, \mu)$$ denote the space of $L^2$-functions measurable with respect to the sub-$\sigma$-algebra  $S_1^{-n}S_2^{-m}\mathcal{B}$, which we note is precisely the set of those functions written as $$S_1^{n}S_2^{m}g \quad\text{for some } g \in L^2(M,\mathcal{B}, \mu).$$ Now let $$H_{\infty} = \bigcap_{n,m\geq 0} H_{n,m} = L^2(M, \bigcap_{n,m \geq 0} S_1^{-n}S_2^{-m}\mathcal{B}, \mu)$$ and let $Q_{n,m}$ and $Q_{\infty}$ denote the orthogonal projection onto $H_{n,m}$ and $H_{\infty}$ respectively. Note that for all $f \in L^2(M,\mathcal{B}, \mu)$ and sequences $n_i \to \infty, m_i \to \infty$ we have that $$\lim_{i \to \infty} Q_{n_i, m_i}f = Q_{\infty}f.$$ Note also that $Q_{n,m}$ and $Q_{\infty}$ are conditional expectations and hence have $L^{\infty}$ norm at most $1$.

\begin{lemma} We have that $S_1^{n}P_1^{n}S_2^{m}P_2^{m} = Q_{n,m}$.

\end{lemma}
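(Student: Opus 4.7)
The plan is to show that $E := S_1^{n} P_1^{n} S_2^{m} P_2^{m}$ is the orthogonal projection onto $H_{n,m}$ by verifying directly that $E$ is idempotent, self-adjoint, has range contained in $H_{n,m}$, and acts as the identity on $H_{n,m}$. The whole argument is driven by four commutation relations available in this section: $S_1 S_2 = S_2 S_1$, $P_1 P_2 = P_2 P_1$, $P_1 S_2 = S_2 P_1$ and $P_2 S_1 = S_1 P_2$, together with the ``left-inverse'' identities $P_i S_i = \operatorname{I}$ and the adjointness $P_i^{*} = S_i$ on $L^{2}(M,\mu)$ coming from Proposition~\ref{prop: P S basics}.

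First, using $P_1^{n} S_2^{m} = S_2^{m} P_1^{n}$, I would rewrite $E$ in the more convenient ``sorted'' form
\[
E = S_1^{n} S_2^{m} P_1^{n} P_2^{m}.
\]
Next I would establish the key cancellation identity
\[
P_1^{n} P_2^{m} S_1^{n} S_2^{m} = \operatorname{I},
\]
which follows by moving $S_1^{n}$ past $P_2^{m}$ (using $P_2 S_1 = S_1 P_2$), then applying $P_1^{n} S_1^{n} = \operatorname{I}$, and finally $P_2^{m} S_2^{m} = \operatorname{I}$. Idempotency $E^{2}=E$ is then an immediate consequence:
\[
E^{2} = S_1^{n} S_2^{m} (P_1^{n} P_2^{m} S_1^{n} S_2^{m}) P_1^{n} P_2^{m} = S_1^{n} S_2^{m} P_1^{n} P_2^{m} = E.
\]
Self-adjointness is just as quick: taking adjoints term by term and using $P_i^{*}=S_i$ and the commutation of $S_1,S_2$ (resp.\ of $P_1,P_2$) gives
\[
E^{*} = (P_2^{m})^{*}(P_1^{n})^{*}(S_2^{m})^{*}(S_1^{n})^{*} = S_2^{m} S_1^{n} P_2^{m} P_1^{n} = S_1^{n} S_2^{m} P_1^{n} P_2^{m} = E.
\]

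For the range, I would note that since Koopman operators pull back along the $S_i$, the image $S_1^{n} S_2^{m} L^{2}(M,\mathcal{B},\mu)$ coincides with $L^{2}(M, S_1^{-n} S_2^{-m}\mathcal{B}, \mu) = H_{n,m}$ (using $S_1 S_2 = S_2 S_1$ to see $S_2^{-m} S_1^{-n} \mathcal{B} = S_1^{-n} S_2^{-m} \mathcal{B}$). Hence $E(L^{2}) \subset S_1^{n} S_2^{m} L^{2} = H_{n,m}$. Conversely, for any $f = S_1^{n} S_2^{m} g \in H_{n,m}$, the cancellation identity gives $E f = S_1^{n} S_2^{m} (P_1^{n} P_2^{m} S_1^{n} S_2^{m}) g = S_1^{n} S_2^{m} g = f$, so $E$ restricts to the identity on $H_{n,m}$. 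An idempotent, self-adjoint operator with range exactly $H_{n,m}$ and which is the identity on $H_{n,m}$ is precisely the orthogonal projection $Q_{n,m}$, completing the proof.

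The only mildly delicate point is bookkeeping of the commutation relations; in particular one must be careful to use the two ``cross'' commutations $P_1 S_2 = S_2 P_1$ and $P_2 S_1 = S_1 P_2$ (both of which are available here — the first from the setup of Section~6 and the second from the hypothesis in Theorem~\ref{thm: arithmetic product trees in Endomorphic}), because the naive identity $P_i S_j = S_j P_i$ is not automatic. Everything else is formal manipulation of $P_i S_i = \operatorname{I}$ and the Koopman--Markov adjointness.
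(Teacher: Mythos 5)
Your proof is correct, and it repackages essentially the same operator identities the paper uses, in a slightly different (and arguably tidier) form. The paper verifies the two defining properties of $Q_{n,m}$ directly: it checks that $E = S_1^{n}P_1^{n}S_2^{m}P_2^{m}$ fixes every $f = S_1^{n}S_2^{m}g \in H_{n,m}$ (the same cancellation computation as yours), and then, for $f \perp H_{n,m}$ and $h \in H_{n,m}$, it computes $\langle Ef, h\rangle = \langle f, S_2^{m}S_1^{n}P_2^{m}P_1^{n}h\rangle = 0$, using $P_i^{*} = S_i$ and the cross-commutation $S_1P_2 = P_2S_1$, which it obtains exactly as you indicate, by taking $L^2$-adjoints of $P_1S_2 = S_2P_1$. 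Your route instead shows $E$ is a self-adjoint idempotent with range exactly $H_{n,m}$: the self-adjointness computation is the paper's adjoint computation in disguise, while the sorted form $E = S_1^{n}S_2^{m}P_1^{n}P_2^{m}$ makes the range containment $E(L^2) \subset H_{n,m}$ explicit — a point the paper leaves implicit, and which is actually needed (or self-adjointness is) to upgrade ``identity on $H_{n,m}$ and maps $H_{n,m}^{\perp}$ into $H_{n,m}^{\perp}$'' to $E = Q_{n,m}$; so your version is, if anything, marginally more complete. Two small remarks: the cancellation $P_1^{n}P_2^{m}S_1^{n}S_2^{m} = \operatorname{I}$ needs no cross-commutation at all, since $S_1^{n}S_2^{m} = S_2^{m}S_1^{n}$ and then $P_2^{m}S_2^{m} = \operatorname{I}$ and $P_1^{n}S_1^{n} = \operatorname{I}$ finish it; and your appeal to both cross-commutations is legitimate — the standing assumption of this section gives $P_1S_2 = S_2P_1$, and $P_2S_1 = S_1P_2$ then follows on $L^2(M,\mu)$ by adjointness via Proposition~\ref{prop: P S basics}, which is all that is required here (you should not rely on it being a pointwise identity). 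Implicitly you also use that $E$ is bounded, which is immediate since the $P_i$ have norm $1$ and the $S_i$ are isometries of $L^2(M,\mu)$.
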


\begin{proof} First we show that if $f \in H_{n,m}$ then the left hand side fixes $f$. By definition, this means that $f = S_1^n S_2^m g$ for some $g$. Thus we have $$ S_1^{n}P_1^{n}S_2^{m}P_2^{m}f = S_1^{n}P_1^{n}S_2^{m}P_2^{m} S_1^n S_2^m g = S_1^n S_2^m g = f$$ where we have used the commutativity of $S_1$ and $S_2$ together with the relations $P_iS_i = I$, thus showing that the left hand side is indeed the identity on $H_{n,m}$. Now we show that if $f$ is orthogonal to $H_{n,m}$ then so is the left hand side applied to $f$. If $h \in H_{n,m}$ then $$\langle S_1^{n}P_1^{n}S_2^{m}P_2^{m} f, h \rangle = \langle f, (S_1^{n}P_1^{n}S_2^{m}P_2^{m})^* h \rangle = \langle f, S_2^m P_2^m S_1^n P_1^n h \rangle = \langle f, S_2^m  S_1^n P_2^m P_1^n h \rangle$$ where we have used the fact that $S_i$ and $P_i$ are adjoint (see Proposition~\ref{prop: P S basics}) followed by the commutativity of $S_1$ and $P_2$ (which follows from the fact that $S_2 = P_2^*$ and $P_1 = S_1^*$ commute). However, we see that $ S_2^m  S_1^n P_2^m P_1^n h \in H_{n,m}$ and so the fact that $f$ is orthogonal to $H_{n,m}$ implies that this inner product is indeed $0$.   \end{proof}

\begin{lemma} We have that $H_{\infty}$ is $P_1$,$P_2$, $S_1$ and $S_2$ invariant. Moreover, $P_i$ and $S_i$ are inverses on $H_{\infty}$. \end{lemma}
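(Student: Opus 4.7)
The plan is to verify each of the four invariances separately, using only the algebraic relations already established, and then to derive that $P_i$ is a two-sided inverse of $S_i$ on $H_\infty$ from the fact that elements of $H_\infty$ already lie in the range of $S_i$.

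For the invariance under the shifts, I would first observe that $S_1$ sends $H_{n,m}$ into $H_{n+1,m} \subset H_{n,m}$ (indeed if $f = S_1^n S_2^m g$ then $S_1 f = S_1^{n+1} S_2^m g$, which uses that $S_1$ and $S_2$ commute), and similarly $S_2$ sends $H_{n,m}$ into $H_{n,m+1}\subset H_{n,m}$. Intersecting over all $n,m \geq 0$ shows that $S_1$ and $S_2$ map $H_\infty$ into itself.

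Next, for the Markov operators, the key identities are $P_1 S_1 = \operatorname{I}$, $P_2 S_2 = \operatorname{I}$, and the commutation of $P_1$ with $S_2$ (which follows, as noted just before the lemma, from $S_2$ and $P_1$ commuting), and symmetrically of $P_2$ with $S_1$. Given $f \in H_\infty$ and arbitrary $n,m \geq 0$, we have $f \in H_{n+1,m}$, so $f = S_1^{n+1} S_2^m g$ for some $g \in L^2$. Then
\[
P_1 f = P_1 S_1^{n+1} S_2^m g = S_1^n S_2^m (P_1 g \text{ after commuting}) \in H_{n,m},
\]
where more carefully one uses $P_1 S_2^m = S_2^m P_1$ to move all the $P_1$'s past the $S_2$'s, and then $P_1 S_1^{n+1} = S_1^n$ on $L^2$. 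Since this holds for every $n,m$, we get $P_1 f \in H_\infty$; the argument for $P_2$ is symmetric.

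Finally, for the assertion that $P_i$ and $S_i$ are inverses on $H_\infty$: the relation $P_i S_i = \operatorname{I}$ holds globally on $L^2$ by Proposition~\ref{prop: P S basics}, so it certainly holds on $H_\infty$. For the opposite direction, any $f \in H_\infty$ lies in $H_{1,0}$ (for $i=1$) or $H_{0,1}$ (for $i=2$), so $f = S_i g$ for some $g \in L^2$, whence
\[
S_i P_i f = S_i P_i S_i g = S_i g = f.
\]
This completes the proof. I expect no essential obstacle; the only point demanding a little care is verifying that $P_i$ preserves $H_\infty$, which is where the cross-commutativity between $P_i$ and $S_j$ (for $i \neq j$) is genuinely needed — without it one could not rewrite $P_1 S_1^{n+1} S_2^m g$ as something manifestly in $H_{n,m}$.
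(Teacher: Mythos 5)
Your general strategy matches the paper's: represent $f \in H_{n,m}$ as $S_1^n S_2^m g$ and cancel against $P_i S_i = \operatorname{I}$. The $S_i$-invariance and inverse arguments are correct as written. However, the displayed line for $P_1$-invariance is wrong: $P_1 S_1^{n+1} S_2^m g = P_1\bigl(S_1(S_1^n S_2^m g)\bigr) = S_1^n S_2^m g$, and no leftover $P_1$ acts on $g$. More substantively, your closing remark that the cross-commutativity $P_1 S_2 = S_2 P_1$ is ``genuinely needed'' is a misdiagnosis. In $P_1 S_1^{n+1} S_2^m g$ the operator $P_1$ is already adjacent to an $S_1$, so it cancels outright; the recipe of ``first moving $P_1$ past the $S_2$'s, then cancelling against $S_1^{n+1}$'' cannot even be parsed in the given order, since the $S_2$'s sit to the far right. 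For $P_2$-invariance the paper simply rewrites $S_1^n S_2^{m+1} g = S_2^{m+1} S_1^n g$ using commutativity of $S_1$ with $S_2$ and then cancels $P_2 S_2$. So the paper's proof uses only $P_i S_i = \operatorname{I}$ and $S_1 S_2 = S_2 S_1$; the commutations $P_i S_j = S_j P_i$ for $i \neq j$ do hold under the hypotheses (and are used elsewhere), but play no role in this particular lemma.
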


\begin{proof} If $f \in H_{n,m}$, with $n,m>0$, then $f = S_1^n S_2^m g$ for some $g$. Hence using $P_1S_1 = \operatorname{I}$ we get that $P_1f = S_1^{n-1}S_2^m g \in H_{n-1,m}.$ So $P_1$ maps $H_{n,m}$ to $H_{n-1,m}$ and similarly, by commutativity of $S_1$ and $S_2$, we can show that $P_2$ maps $H_{n,m}$ to $H_{m,n-1}$. As $H_{n,m}$ is a subset of $H_{n-1,m}$ and $H_{n,m-1}$, we can see that $H_{\infty}$ is indeed invariant under $P_1$ and $P_2$. Clearly $S_1$, $S_2$ preserve $H_{n,m}$ thus $H_{\infty}$ as well. Now given $f \in H_{\infty}$ we have that $f = S_1g$ for some $g$ and hence $$S_1P_1f = S_1P_1S_1g = S_1g = f,$$ showing that $S_1$ is a left inverse of $P_1$. \end{proof}

We can now see that for any $f \in L^2(M, \mathcal{B},\mu)$ we have that \begin{align} \label{Q infinity limit} \| P_1^n P_2^m f - S_1^{-n} S_2^{-m} Q_{\infty} f \|_2 = \|S_1^n S_2^m P_1^n P_2^m f - Q_{\infty} f\|_2 = \| Q_{n,m}f - Q_{\infty}f\| \to 0 \quad\text{as } n,m \to \infty. \end{align}

Let $u_1,u_2, v_1,v_2 \in \bZ_{>0}$ and let $$W_1 = P_1^{u_1}P_2^{u_2}, W_2 = P_1^{v_1}P_2^{v_2}.$$ Now for $\lambda \in \Lambda$ we let $B_{\lambda} = (T_1)_{\lambda}(M)$ and $C_{\lambda} = (T_2)_{\lambda}(M)$. Note that the $B_{\lambda}$ are disjoint by assumption, likewise for $C_{\lambda}$. Moreover, by disjointness we have that $P_1(\mathds{1}_{B_{\lambda}}h)(x) = (p_1)_{\lambda}h((T_1)_{\lambda}x)$ for any $h \in L^{\infty}(M,\mu)$ and likewise for $C_{\lambda}$, which we will use throughout. In particular, this means that $P_1(\mathds{1}_{B_{\lambda}}) = (p_1)_{\lambda}>0$ is a positive constant function and so the function $$\varphi_1 = \mathds{1}_A \prod_{\lambda \in \Lambda} (P_1 \mathds{1}_{B_{\lambda}} ) \cdot (P_2 \mathds{1}_{C_{\lambda}}) $$ has positive integral. Note that if $x \in A$ is such that $\varphi_1(x)>0$ then that means that for each $\lambda$ with positive probability $(p_1)_{\lambda}(x) >0$ one can land in $B_{\lambda}$, and likewise for $C_{\lambda}$. Now let $$\varphi_2 = \mathds{1}_A \prod_{\lambda \in \Lambda} P_1(\mathds{1}_{B_{\lambda}} \cdot P_1^{u_1n-1}P_2^{u_2n}\varphi_1) \cdot   P_2(\mathds{1}_{C_{\lambda}} \cdot P_1^{v_1 n}P_2^{v_2n-1}\varphi_1).
$$ Note that if $x \in M$ is such that $\varphi_2(x)>0$ then that means that $x \in A$ and one may find, for each $\lambda \in \Lambda$, a path $x_0, x_1,\ldots, x_{u_1n}$ in the Markov system $(M,T_1,p_1)$ of length $n$ which starts at $x_0 = x$, then immediately hits $B_{\lambda}$ (that is, $x_1 \in B_{\lambda}$) and so this path has initial direction $\lambda$ (by disjointness of the $B_{\lambda}$), followed by a path $y_0, \ldots, y_{u_2 n}$ in the Markov system $(M,T_2,p_2)$ with $y_0 = x_n$ and $\varphi_1(y_{u_2 n}) >0$, thus $y_{u_2n} \in A$. Likewise, the same implication holds for $C_{\lambda}$ and $(v_1,v_2)$ in place of $B_{\lambda}$ and $(u_1,u_2)$. Hence, using the language of Definition~\ref{def: tree in Markov system} we have that $x$ is the root of some $(nu,nv)$-arithmetic tree of order $1$ in $A$, where $u = (u_1,u_2)$ and $v = (v_1, v_2)$. From (\ref{Q infinity limit}), we now have for large enough $n$ that $$P_1^{u_1n - 1} P_2^{u_2n} \varphi_1 = S_1^{-u_1n + 1} S_2^{-u_2n}\varphi_{\infty} + o(1)$$ where $\varphi_{\infty} = Q_{\infty}\varphi_1$ and likewise for $v_1,v_2$ in place of $u_1,u_2$. Thus for large enough $n$ we have that $\varphi_2$ is approximately (in $L^2$) 

$$\psi_2 =  \mathds{1}_A \prod_{\lambda \in \Lambda} P_1(\mathds{1}_{B_{\lambda}} \cdot S_1^{-u_1n + 1} S_2^{-u_2n}\varphi_{\infty}) \cdot   P_2(\mathds{1}_{C_{\lambda}} \cdot S_1^{-v_1 n} S_2^{-v_2 n + 1}\varphi_{\infty}),$$ that is, since $\psi_2$ is almost $\varphi_2$ for $n$ large enough, it will be enough to show that $\psi_2$ has positive integral for arbitrarily large $n$ in order to verify the same for $\varphi_2$.

Using the identity $P(h \cdot Sg) = Ph \cdot g$, for $P = P_1,P_2$, we obtain that 
\begin{align*} \psi_2 &=  \left(\mathds{1}_A \prod_{\lambda \in \Lambda} (P_1 \mathds{1}_{B_{\lambda}} )\cdot (P_2 \mathds{1}_{C_{\lambda}}) \right)  \left(S_1^{-u_1n} S_2^{-u_2n}\varphi_{\infty}\right)^{|\Lambda|} \left(S_1^{-v_1n} S_2^{-v_2n}\varphi_{\infty}\right)^{|\Lambda|} \\ &= \varphi_1 \left(S_1^{-u_1n} S_2^{-u_2n}\varphi_{\infty}\right)^{|\Lambda|} \left(S_1^{-v_1n} S_2^{-v_2n}\varphi_{\infty}\right)^{|\Lambda|}\\ &= \varphi_1 \cdot W_1^{n} \varphi_{\infty}^{|\Lambda|} \cdot W_2^{n} \varphi_{\infty}^{|\Lambda|} \end{align*} 
where in the final step we have used the fact that $W_1$ and $W_2$ are inverses of Koopman operators and thus are multiplicative (on $L^{\infty}$). Since we wish to show that $\psi_2$ has a positive integral (for arbitrarily large $n$), it will be enough to show that $Q_{\infty} \psi_2$ has a positive integral as $Q_{\infty}$ is a conditional expectation operator. Moreover, the fact that it is a conditional expectation operator means that $Q_{\infty}(f \cdot g) = (Q_{\infty}f) \cdot g$ for $g$ in $H_{\infty}$ and $f,g$ in $L^{\infty}$, hence 
$$Q_{\infty} \psi_2 =  \varphi_{\infty} \cdot W_1^{n} \varphi_{\infty}^{|\Lambda|} \cdot W_2^{n} \varphi_{\infty}^{|\Lambda|}.$$ 
Now since $\varphi_{\infty}$ and $\varphi_{\infty}^{|\Lambda|}$ are positive functions with the same support, the Ergodic Szemer\'edi theorem for commuting transformations \cite{FKSZ} (applied to $W_1$ and $W_2$) demonstrates that for arbitrarily large $n$, the integral of $Q_{\infty}\psi_2$ is bounded away from zero (greater than a positive constant depending only on $\psi_2$).

We now proceed to show the existence of order $r$ arithmetic trees in $A$ by studying higher order recurrence. To do this, we define recursively $$\varphi_r = \mathds{1}_A \prod_{\lambda \in \Lambda} P_1(\mathds{1}_{B_{\lambda}} \cdot P_1^{u_1n-1}P_2^{u_2n}\varphi_{r-1}) \cdot   P_2(\mathds{1}_{C_{\lambda}} \cdot P_1^{v_1 n}P_2^{v_2 n - 1}\varphi_{r-1}).
$$ for $r \geq 2$ (for $r=2$ this is indeed the definition of $\varphi_2$ above). One can prove by induction that $\varphi_{r}(x) > 0$ implies that $x$ is the root of an order $r-1$ $(nu,nv)$-arithmetic tree as follows. We have already argued the base case above. For the induction step, notice that $\varphi_{r}(x)>0$ implies that $x \in A$ and one may find, for each $\lambda \in \Lambda$, a path $x_0, x_1,\ldots, x_{u_1n}$ in the Markov system $(M,T_1,p_1)$ of length $n$ which starts at $x_0 = x$, then immediately hits $B_{\lambda}$ (that is, $x_1 \in B_{\lambda}$) and so this path has initial direction $\lambda$, followed by a path $y_0, \ldots, y_{u_2 n}$ in the Markov system $(M,T_2,p_2)$ with $y_0 = x_n$ and $\varphi_{r-1}(y_{u_2 n}) >0$, thus by the inductive hypothesis $y_{u_2 n}$ is the root of an order $r-2$ $(nu,nv)$-arithmetic tree in $A$. Likewise, the same implication holds for $C_{\lambda}$ and $(v_1,v_2)$ in place of $B_{\lambda}$ and $(u_1,u_2)$. Using the recursive structure of $(nu,nv)$-arithmetic trees we get that indeed $x$ is the root of an $(nu,nv)$-arithmetic tree of order $(r-2)+1=r-1$, completing the inductive step. As shown for $\varphi_1$, we can show that $$\| P_1^{u_1n - 1} P_2^{u_2n} \varphi_{r-1} - S_1^{-u_1n + 1} S_2^{-u_2n}Q_{\infty}\varphi_{r-1} \|_2 \to 0 \quad \text{as } n \to \infty.$$ Hence we have that $\| \psi_r - \varphi_r \|_2 \to 0$ as $n \to \infty$ where $$\psi_r =  \mathds{1}_A \prod_{\lambda \in \Lambda} P_1(\mathds{1}_{B_{\lambda}} \cdot S_1^{-u_1n + 1} S_2^{-u_2n}Q_{\infty}\varphi_{r-1}) \cdot   P_2(\mathds{1}_{C_{\lambda}} \cdot S_1^{-v_1 n} S_2^{-v_2 n + 1}Q_{\infty}\varphi_{r-1})$$ which, by the same calculations as above, we can write as $$\psi_r = \varphi_1 \cdot  \left(W_1^{n} Q_{\infty}\varphi_{r-1}\right)^{|\Lambda|} \cdot  \left(W_2^{n} Q_{\infty}\varphi_{r-1}\right)^{|\Lambda|}.$$ Now as $\varphi_{r-1}$ is close to $\psi_{r-1}$ for large $n$ and $\|\psi_r\|_{\infty} \leq 1$, we see that $\psi_r$ is $L^2$-close to $$\varphi_1 \cdot \left(W_1^{n} Q_{\infty}\psi_{r-1}\right)^{|\Lambda|} \cdot \left(W_2^{n} Q_{\infty}\psi_{r-1}\right)^{|\Lambda|}.$$ Now by induction on $r$ we see that, for large $n$, we have that $\psi_r$ is $L^2$ close to a product of the form $$\psi'_r = \varphi_1 \prod_{a,b} W_1^{an}W_2^{bn}\varphi_{\infty}$$ where the product is taken over some finite multiset of non-negative integers $a,b$. As before, we see that $$Q_{\infty}\psi'_r = \varphi_{\infty} \prod_{a,b} W_1^{an}W_2^{bn}\varphi_{\infty}$$ and again by the Ergodic Szemer\'edi theorem for commuting transformations we see that this function's integral is bounded away from zero for sufficiently large $n$.

\section{Markov Processes on Labelled trees}
 
In this section we will prove Theorem~\ref{thm: main thm on trees in A} using our main Theorem~\ref{thm: arithmetic product trees in Markov} on arithmetic product trees in Markov systems. Let $\Lambda$ be a finite set and let $$\mathcal{T} = \{0,1\}^{\Lambda^* \times \Lambda^*}$$ Thus an element in $\tau \in \mathcal{T}$ is a $\{0,1\}$ vertex labelled tree, i.e., a function from the product of two trees $\tau:\Lambda^* \times \Lambda^* \to \{0,1\}$ with each vertex pair $(w_1,w_2)$ labelled $\tau(w_1,w_2)$. We now define two right actions of $\Lambda^*$ on $\mathcal{T}$ as follows. For $w \in \Lambda^*$ and $\tau \in \mathcal{T}$ let $\tau X_w \in \mathcal{T}$ denote the $\{0,1\}$ labelled tree by $$ \tau X_w (w_1,w_2) = \tau (w w_1, w_2).$$

\begin{lemma} This defines a right action, i.e., $\tau X_{\emptyset} = \tau$ and $$\tau X_wX_{w'} = \tau X_{ww'}$$. \end{lemma}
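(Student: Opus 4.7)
The plan is to verify both identities by a direct unfolding of the defining formula $\tau X_w(w_1,w_2) = \tau(w w_1, w_2)$, using only the fact that $\Lambda^*$ is a semigroup with identity $\emptyset$. There is no real obstacle here: the entire content of the lemma is associativity of concatenation in the free semigroup transported to an action on the space of labelled trees.

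First I would handle the identity statement. By the definition, for arbitrary $(w_1, w_2) \in \Lambda^* \times \Lambda^*$ we have
\[
(\tau X_{\emptyset})(w_1, w_2) = \tau(\emptyset \cdot w_1, w_2) = \tau(w_1, w_2),
\]
since $\emptyset$ is the identity of $\Lambda^*$. As this holds for every $(w_1,w_2)$, we conclude $\tau X_{\emptyset} = \tau$ as elements of $\mathcal{T} = \{0,1\}^{\Lambda^* \times \Lambda^*}$.

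Next I would verify the associativity identity $\tau X_w X_{w'} = \tau X_{ww'}$, which is to be read as $(\tau X_w) X_{w'} = \tau X_{ww'}$. For any $(w_1, w_2)$, applying the definition twice gives
\[
\bigl((\tau X_w) X_{w'}\bigr)(w_1, w_2) = (\tau X_w)(w' w_1, w_2) = \tau\bigl(w (w' w_1), w_2\bigr) = \tau\bigl((w w') w_1, w_2\bigr),
\]
where the last equality uses associativity of concatenation in $\Lambda^*$. By definition, this final quantity equals $(\tau X_{ww'})(w_1, w_2)$, so the two labelled trees agree pointwise and hence as elements of $\mathcal{T}$. Together with $\tau X_{\emptyset} = \tau$, this shows that $w \mapsto (\tau \mapsto \tau X_w)$ is a right action of $\Lambda^*$ on $\mathcal{T}$, as claimed.
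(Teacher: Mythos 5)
Your proof is correct and matches the paper's argument: both unfold the definition $\tau X_w(w_1,w_2) = \tau(ww_1, w_2)$ twice and appeal to associativity of concatenation. You additionally verify the identity $\tau X_{\emptyset} = \tau$ explicitly, which the paper omits but is equally immediate.
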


\begin{proof} For $(w_1,w_2) \in \Lambda^* \times \Lambda^* $ we have that 
\begin{align*} (\tau X_wX_{w'}) (w_1,w_2) &= ((\tau X_w)X_{w'}) (w_1,w_2) \\&= (\tau X_w) (w'w_1,w_2) \\&= \tau (ww'w_1,w_2) \\&= (\tau X_{ww'})(w_1,w_2). \end{align*} \end{proof}

Similarly, we have a right action given by defining for $w \in \Lambda^*$ and $\tau \in \mathcal{T}$ the element $\tau Y_w \in \mathcal{T}$ given by $$\tau Y_w (w_1,w_2) = (w_1,ww_2) \quad \text{for } (w_1,w_2) \in \Lambda^* \times \Lambda^*.$$ Clearly these two actions commute, i.e., $\tau X_w Y_{w'} = \tau Y_{w'} X_w$.

Now define a new space of labelled vertex-labelled trees $M = \mathcal{T} \times \Lambda \times \Lambda$, i.e., an element $\pi \in M$ is now a triple $\pi = (\tau, \lambda_1,\lambda_2)$, which we think of as $\tau \in \mathcal{T}$ labelled by a pair $(\lambda_1,\lambda_2) \in \Lambda \times \Lambda$. We define a right action of $\Lambda^*$ by defining $$(\tau, \lambda_1, \lambda_2) X_w = (\tau X_w, t(w), \lambda_2)$$ for non-empty $w = \lambda_1 \cdots \lambda_n \in \Lambda^*$ where $t(w) = \lambda_n$ denotes the final letter (if $w$ is empty, then we define $X_w$ act trivially). Analogously, we define $$(\tau, \lambda_1, \lambda_2) Y_w = (\tau Y_w, \lambda_1, t(w)).$$ Clearly, these two actions commute, i.e., $$\pi X_w Y_{w'} = \pi Y_{w'} X_w \quad \text{for } \pi \in M, w,w' \in \Lambda^*.$$

We may define a $\Lambda$-Markov System on $M$ by defining $p_{\lambda}(x) = \frac{1}{|\Lambda|}$ and $(T_1)_{\lambda}\pi = \pi X_{\lambda}$ and another $\Lambda$-Markov system (with the same transition probabilities) by defining $(T_2)_{\lambda}\pi= \pi  Y_{\lambda}$. Note that these two Markov Systems commute and hence so do their Markov Operators $P_1$ and $P_2$. For $w \in \Lambda^* \times \Lambda^*$, we define $\pi(w) := \tau(w)$ for $\pi = (\tau, \lambda_1,\lambda_2) \in M$.

Given any subset $A \subset \Lambda^* \times \Lambda^*$ we may define an element $\pi_A \in M$ where $$\pi_A = ( \mathds{1}_A, \lambda_1, \lambda_2)$$ where $\lambda_1, \lambda_2$ are arbitrary. Define also $$L_{i,j} = \{ (w_1,w_2) \in \Lambda^* \times \Lambda^* ~|~ |w_1| = i, |w_2| = j \}$$ where $|w|$ denotes the length of $w \in \Lambda^*$. For $S \subset \Lambda^* \times \Lambda^*$ we may define a density $$d_N(S) =  \frac{1}{N^2} \sum_{i,j=0}^{N-1} \frac{|S \cap L_{i,j}|}{|L_{i,j}|}.$$

Now let $$\mu_N = \frac{1}{N^2}\sum_{i,j=0}^{N-1} (P_{1}^*)^{i} (P_{2}^*)^{j} \delta_{\pi_A}$$ and let $$E=\{ (\tau, \lambda_1, \lambda_2) \in M ~|~ \tau(\emptyset, \emptyset) = 1 \}.$$

\begin{prop} For positive integers $N$ we have that $$\mu_N(E) =d_N(A).$$

\end{prop}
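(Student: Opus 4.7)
The plan is to unfold the definitions of $\mu_N$ and $E$ so that $\mu_N(E)$ becomes an average of evaluations of iterated Markov operators at the point $\pi_A$, and then to identify each such evaluation with the level-set density of $A$. First I would note the crucial identity $\mathds{1}_E(\pi) = \tau(\emptyset,\emptyset)$ for $\pi = (\tau, \lambda_1, \lambda_2) \in M$. Together with the definition of the dual action of the Markov operators on measures, this gives
\[
\mu_N(E) = \int \mathds{1}_E \, d\mu_N = \frac{1}{N^2}\sum_{i,j=0}^{N-1} (P_1^i P_2^j \mathds{1}_E)(\pi_A),
\]
so the problem reduces to evaluating $P_1^i P_2^j \mathds{1}_E$ at $\pi_A$.

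Next I would iterate the formula for $P_1$ and $P_2$. Since the transition probabilities are the uniform $\tfrac{1}{|\Lambda|}$ and $(T_1)_\lambda \pi = \pi X_\lambda$, a straightforward induction on $i$ yields
\[
(P_1^i f)(\pi) = \frac{1}{|\Lambda|^i} \sum_{w \in S_i(\Lambda^*)} f(\pi X_w),
\]
and analogously for $P_2$ using the $Y$-action. Because the $X$- and $Y$-actions on $M$ commute, the operators $P_1$ and $P_2$ commute and their $i$-fold and $j$-fold compositions give
\[
(P_1^i P_2^j \mathds{1}_E)(\pi) = \frac{1}{|\Lambda|^{i+j}} \sum_{\substack{w_1 \in S_i(\Lambda^*) \\ w_2 \in S_j(\Lambda^*)}} \mathds{1}_E\bigl(\pi X_{w_1} Y_{w_2}\bigr).
\]

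Now I would use the identity for $\mathds{1}_E$ together with the definitions of the $X$- and $Y$-actions on $\mathcal{T}$: for $\pi = (\tau, \lambda_1, \lambda_2)$,
\[
\mathds{1}_E(\pi X_{w_1} Y_{w_2}) = (\tau X_{w_1} Y_{w_2})(\emptyset, \emptyset) = \tau(w_1, w_2).
\]
Specialising to $\pi_A = (\mathds{1}_A, \lambda_1, \lambda_2)$ and recognising that summing $\mathds{1}_A(w_1,w_2)$ over $(w_1,w_2) \in S_i(\Lambda^*) \times S_j(\Lambda^*) = L_{i,j}$ and dividing by $|\Lambda|^{i+j} = |L_{i,j}|$ gives $|A \cap L_{i,j}|/|L_{i,j}|$, we obtain
\[
(P_1^i P_2^j \mathds{1}_E)(\pi_A) = \frac{|A \cap L_{i,j}|}{|L_{i,j}|}.
\]
Substituting back into the earlier average produces $\mu_N(E) = \frac{1}{N^2}\sum_{i,j=0}^{N-1} |A \cap L_{i,j}|/|L_{i,j}| = d_N(A)$, as required.

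There is no real obstacle here: the statement is a bookkeeping identity, and the only thing to be careful about is the order of unwrapping (pairing the dual action of $P_1^*, P_2^*$ on $\delta_{\pi_A}$ with the forward action on the test function $\mathds{1}_E$, then iterating the Markov operators, and finally using the definitions of the right actions $X_w, Y_w$ on labelled trees). The constant transition probabilities and the commutativity of the two actions ensure that the iterated operator acts as uniform averaging over pairs $(w_1,w_2)$ of the correct length, which is precisely what makes the level-set density appear.
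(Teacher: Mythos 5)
Your proof is correct and takes essentially the same route as the paper, just phrased in the dual picture: the paper pushes $\delta_{\pi_A}$ forward under $(P_1^*)^i(P_2^*)^j$ and evaluates the resulting measure on the clopen set $E$, whereas you apply $P_1^iP_2^j$ to $\mathds{1}_E$ and evaluate at $\pi_A$. Both reduce to the same uniform average of $\mathds{1}_A(w_1,w_2)$ over $L_{i,j}$, so there is nothing further to add.
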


\begin{proof} First note that for any measure $\mu$ on $M$ we have that 
\begin{align*} (P_{1}^*)^{n} (P_{2}^*)^{m} \mu &= (\sum_{\lambda \in \Lambda}\frac{1}{|\Lambda|}(T_1)^*_{\lambda})^n(\sum_{\lambda \in \Lambda}\frac{1}{|\Lambda|}(T_2)^*_{\lambda})^m \mu \\ &= \frac{1}{|\Lambda|^{n+m}} \sum_{w_1 \in \Lambda^n}\sum_{w_2 \in \Lambda^m} \mu X_{w_1}^* Y_{w_2}^* \end{align*} 
where we have defined $\mu X_w^*$ to be the pushforward measure of $\mu$ under the mapping $\pi \mapsto  \pi X_w$, and likewise we have similarly defined $\mu Y_w^*$. However, note that $$(\delta_{\pi_A}X_{w_1}^* Y_{w_2}^*) (E) = \delta_{\pi_A X_{w_1}Y_{w_2}}(E) $$ and so this equals $1$ if and only if $(\mathds{1}_A X_{w_1}Y_{w_2})(\emptyset, \emptyset) = 1$ which happens if and only if $\mathds{1}_A(w_1,w_2) \in 1$ which happens if and only if $(w_1,w_2) \in A$. Thus  $$(P_{1}^*)^{n} (P_{2}^*)^{m}\delta_{\pi_A}(E) =  \frac{|A \cap L_{n,m}|}{|L_{n,m}|}$$ as $|L_{n,m}| = |\Lambda|^{n+m}$, which completes the proof.\end{proof}

We now let $\mu$ be a weak$^*$ limit of the $\mu_N$ such that $$\mu(E) = \limsup_{N \to \infty} \mu_N(E) = \limsup_{N \to \infty} d_N(A) = \overline{d}(A)>0,$$ where we have used the fact that $E$ is clopen. Thus $\mu$ is a stationary measure for the commuting systems $(M,T_1, p)$ and $(M,T_2, p)$. To ensure that we can apply Theorem~\ref{thm: arithmetic product trees in Markov} we now need to show the following lemma.

\begin{lemma} Both of these systems have disjoint images. \end{lemma}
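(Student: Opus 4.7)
The plan is to prove disjointness by reading off a coordinate that records the ``last direction moved'' and observing that this coordinate is forced by $\lambda$ in the image of $(T_i)_\lambda$.

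Recall that a point of $M$ is a triple $\pi = (\tau, \lambda_1, \lambda_2)$ with $\tau \in \mathcal{T}$ and $\lambda_1, \lambda_2 \in \Lambda$. Unpacking the definition of the action, for $\lambda \in \Lambda$ we have
\[
(T_1)_{\lambda}\pi \;=\; \pi X_\lambda \;=\; (\tau X_\lambda,\; t(\lambda),\; \lambda_2) \;=\; (\tau X_\lambda,\; \lambda,\; \lambda_2),
\]
since $\lambda$ is a one-letter word, so $t(\lambda) = \lambda$. Thus every element in $(T_1)_\lambda(M)$ has second coordinate equal to $\lambda$. If $\lambda \ne \lambda'$, then no element can simultaneously have second coordinate $\lambda$ and $\lambda'$, so $(T_1)_\lambda(M) \cap (T_1)_{\lambda'}(M) = \emptyset$.

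The argument for the second system is identical: $(T_2)_\lambda \pi = \pi Y_\lambda = (\tau Y_\lambda, \lambda_1, \lambda)$, so images under different $(T_2)_\lambda$ are distinguished by their third coordinate. Hence both systems have disjoint images.

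There is no real obstacle here; the whole point of the ``label'' coordinates $(\lambda_1, \lambda_2)$ appended to $\mathcal{T}$ in the construction of $M$ is precisely to remember the most recent move direction in each of the two tree factors, so that the disjoint-images hypothesis of Theorem~\ref{thm: arithmetic product trees in Markov} is satisfied. Without these labels, the systems on $\mathcal{T}$ alone would not have disjoint images (e.g.\ constant $\tau$ is fixed by every $X_\lambda$), which is the sole motivation for passing from $\mathcal{T}$ to $M = \mathcal{T} \times \Lambda \times \Lambda$.
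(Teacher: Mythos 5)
Your proof is correct and follows the paper's own argument exactly: the image of $(T_1)_{\lambda}$ lies in $\mathcal{T} \times \{\lambda\} \times \Lambda$ and the image of $(T_2)_{\lambda}$ lies in $\mathcal{T} \times \Lambda \times \{\lambda\}$, and these are pairwise disjoint for distinct $\lambda$. The extra remark about why the label coordinates were introduced is accurate but not needed for the proof itself.
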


\begin{proof} For $\lambda$ and $x\in M$ we have $(T_1)_{\lambda}(x) \in \mathcal{T} \times \{\lambda\} \times \Lambda$. But $ \mathcal{T} \times \{\lambda\} \times \Lambda$ are disjoint subsets of $M$ for different $\lambda$, thus $(M,T_1,p)$ has disjoint images (by definition). Likewise $(T_2)_{\lambda} (x) \in \mathcal{T} \times \Lambda \times \{\lambda\}$ hence $T_2$ also has disjoint images. \end{proof}

It will be convenient to write $$\pi W_{\alpha} = \pi X_{\alpha_1} Y_{\alpha_2} \quad \text{for } \pi \in M \text{ and } \alpha = (\alpha_1,\alpha_2) \in \Lambda^* \times \Lambda^*.$$ 
Now fix an integer $r>0$ and $u,v \in \bZ_{>0}^2$. Fixing $u,v \in \bZ_{>0}^2$ and $r \in \bZ_{>0}$, by Theorem~\ref{thm: arithmetic product trees in Markov} we have an integer $n$ and $E' \subset E$ with $\mu(E')>0$ such that each $\pi \in E'$ is the root of some $(nu,nv)$-arithmetic product tree of order $r$ in $E$ (with respect to our pair of Markov systems $(M,T_1,p)$, $(M,T_2,p)$). By construction, $\mu$ is supported on the closure of orbit of $\pi_A$, where we define the orbit to be the set $$\{ \pi_A W_{\alpha} ~|~ \alpha \in \Lambda^* \times \Lambda^* \}.$$ Thus we now fix such a root $\pi \in E'$ that is also in the closure of this orbit. Now observe that if $x_0,\ldots, x_{\ell} \in M$ a path of length $\ell$ with initial direction $\lambda_0$ in the system $(M,T_1,p)$ then that means that $x_i = x_0 X_{\lambda_0 \ldots \lambda_{i-1}}$. Consequently, this means that a $(nu,nv)$-arithmetic tree in $E$ with root $\pi$ is a map $\psi:B_r(\widetilde{\Gamma}) \to E$ that can be written as $$\psi(\gamma) = \pi W_{\phi(\gamma)}$$ where $\phi:B_r(\widetilde{\Gamma}) \to \Lambda^* \times \Lambda^*$ is a tree in $\Lambda^* \times \Lambda^*$ as defined in Section~\ref{section: Free Products}. Now since $\pi$ is in the orbit closure of $\pi_A$ and the image of $\phi$ is finite (as its domain is), this means that we can choose appropriate $\alpha \in \Lambda^* \times \Lambda^*$ such that $$\pi(w) = \pi_A W_{\alpha} (w) \quad \text{for all } w \in \phi(B_r(\widetilde{\Gamma})).$$ Now for all $\gamma \in B_r(\widetilde{\Gamma})$ we have that $\psi(\gamma) = \pi W_{\phi(\gamma)} \in E$ and thus $$1 = \pi W_{\phi(\gamma)} (\emptyset) = \pi(\phi(\gamma)) = \pi_A W_{\alpha} (\phi(\gamma)) = \pi_A(\alpha \phi(\gamma)),$$ which means that $\alpha \phi(\gamma) \in A$. But one easily sees that $\gamma \mapsto \alpha \phi(\gamma)$ is also an $(nu,nv)$-arithmetic tree as $\phi$ is (the tree structure is preserved by left multiplication). This completes the proof that if $\overline{d}(A)>0$ then $A$ contains an $(nu,nv)$-arithmetic product tree of any order, which is exactly Theorem~\ref{thm: main thm on trees in A}.

\end{document}